\setlist[enumerate,1]{label=(\roman*)}
\setlist{noitemsep}
\newtheorem{theorem}{Theorem}
\newtheorem{prop}[theorem]{Proposition}
\newtheorem{lemma}[theorem]{Lemma}
\newtheorem{coro}[theorem]{Corollary}
\newcommand{\sm}{\setminus}
\newcommand{\BB}{\mathcal B}
\newcommand{\II}{\mathcal I}
\title{\textbf{An update on non-Hamiltonian $\mathbf{\frac{5}{4}}$-tough maximal planar graphs}
}
\author{Adam Kabela\thanks{Department of Mathematics, Institute for Theoretical Computer Science,
and European Centre of Excellence NTIS,
University of West Bohemia, Pilsen, Czech Republic. Email: \texttt{kabela@ntis.zcu.cz}.}
}
\date{}
\begin{document}
\maketitle

\begin{abstract}
 Studying the shortness of longest cycles in maximal planar graphs,
 we improve the upper bound on the shortness exponent of the class of
 $\frac{5}{4}$-tough maximal planar graphs presented by
 Harant and Owens [Discrete Math. 147 (1995), 301--305].
 In addition, we present two generalizations of a similar result of
 Tk\'{a}\v{c} who considered $1$-tough maximal planar graphs [Discrete Math. 154 (1996), 321--328];
 we remark that one of these generalizations gives a tight upper bound.
 We fix a problematic argument used in the first paper.
\end{abstract}


\section{Introduction}

We continue the study of non-Hamiltonian graphs
with the property that removing an arbitrary set of vertices
disconnects the graph into a relatively small number of components (compared to the size of the removed set).
In the present paper, we construct families of maximal planar such graphs whose longest cycles are
short (compared to the order of the graph).

More formally, the properties which we study are
the toughness of graphs and the shortness exponent of classes of graphs
(both introduced in 1973).
We recall that following Chv{\'a}tal~\cite{chva}, the \emph{toughness} of a
graph $G$ is the minimum, taken over all separating sets $X$ of
vertices in $G$, of the ratio of $|X|$ to $c(G - X)$
where $c(G - X)$ denotes the number of components of the graph $G - X$.
The toughness of a complete graph is defined as being infinite.
We say that a graph is \emph{$t$-tough} if its toughness is at least $t$.

Along with the definition of toughness,
Chv{\'a}tal~\cite{chva} conjectured that there is a constant $t_0$
such that every $t_0$-tough graph
(on at least three vertices) is Hamiltonian.
As a lower bound on $t_0$,
Bauer et al.~\cite{74} presented
graphs with toughness arbitrarily close to $\frac94$
which contain no Hamilton path (and thus, they are non-Hamiltonian).
While remaining open for general graphs, Chv\'{a}tal's conjecture was
confirmed in several restricted classes of graphs; and also various
relations among the toughness of a graph and properties of its cycles are known.
We refer the reader to the extensive survey on this topic~\cite{surv}.

Clearly,
every graph (on at least five vertices) of toughness greater than $\frac{3}{2}$ is $4$-connected,
so every such planar graph is Hamiltonian
by the classical result of Tutte~\cite{Tutte}.
On the other hand, Harant~\cite{hara} showed that not every $\frac{3}{2}$-tough planar graph is 
Hamiltonian; and furthermore, 
the shortness exponent of the class of
$\frac{3}{2}$-tough planar graphs is less than $1$.

We recall that following Gr{\"u}nbaum and Walther~\cite{sef},
the \emph{shortness exponent} of a
class of graphs $\Gamma$ is the $\liminf$,
taken over all infinite sequences $G_n$ of non-isomorphic graphs of $\Gamma$
(for $n$ going to infinity),
of the logarithm of the length of a longest cycle in $G_n$
to base equal to the order of $G_n$.

Introducing this notation, Gr{\"u}nbaum and Walther~\cite{sef}
also presented upper bounds on the shortness exponent
for numerous subclasses of the class of $3$-connected planar graphs.
Furthermore, they remarked that the upper bound for the class of $3$-connected planar graphs itself
was presented earlier by Moser and Moon~\cite{momo} who used a slightly different notation.
Later, Chen and Yu~\cite{3-conn} showed that
every $3$-connected planar graph $G$
contains a cycle of length at least $|V(G)|^{\log_3 2}$;
in combination with the bound of~\cite{momo},
it follows that the shortness exponent of this class equals $\log_3 2$.
A number of results considering the shortness exponent and similar parameters
are surveyed in~\cite{owens}.

Considering the class of maximal planar graphs under a certain toughness restriction, 
Owens~\cite{owensTatra} presented non-Hamiltonian maximal planar graphs of
toughness arbitrarily close to $\frac{3}{2}$. 
Harant and Owens~\cite{5/4} argued that the shortness exponent of the class
of $\frac{5}{4}$-tough maximal planar graphs is at most $\log_9 8$.
Improving the bound $\log_7 6$ presented by Dillencourt~\cite{dill},
Tk\'{a}\v{c}~\cite{tkac} showed that it is at most $\log_6 5$
for the class of $1$-tough maximal planar graphs.

In the present paper, we show the following.

\begin{theorem}\label{main}
Let $\sigma$ be the shortness exponent of the class of maximal planar graphs
under a certain toughness restriction.
\begin{enumerate}
\setlength{\itemsep}{3pt}
\item If the graphs are $\tfrac{5}{4}$-tough, then $\sigma$ is at most $\log_{30}{22}$.
\item If the graphs are $\tfrac{8}{7}$-tough, then $\sigma$ is at most $\log_{6}{5}$.
\item If the toughness of the graphs is greater than $1$, then $\sigma$ equals $\log_{3}{2}$.
\end{enumerate}
\end{theorem}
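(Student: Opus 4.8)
\textit{Proof proposal.}
The plan is to prove all three statements by explicit self-similar constructions, supplemented for the matching direction of~(iii) by the lower bound of Chen and Yu~\cite{3-conn}. The basic operation is \emph{substitution} of a \emph{patch} --- a maximal plane graph whose outer face is a triangle $xyz$ --- into a bounded triangular face of a maximal plane graph, by identifying the outer triangle of the patch with the face boundary; the result is again a maximal plane graph. Iterating this from a suitable bounded base graph $G_0$, substituting a fixed patch $P$ into every ``active'' triangular face, produces for each part an infinite sequence $G_0,G_1,G_2,\dots$ of non-isomorphic maximal planar graphs. The three parts use three patches (possibly sharing a common template): for~(i) and~(ii) refinements, respectively, of the gadgets of Harant--Owens~\cite{5/4} and of Tk\'{a}\v{c}~\cite{tkac}, and for~(iii) a patch that is just heavy enough to keep the toughness above $1$ while remaining essentially as ``stacked'' as the Moon--Moser graphs~\cite{momo}.

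First I would do the bookkeeping on the order. If $P$ has $p$ interior vertices and partitions each face it replaces into $q$ new active triangular faces, then writing $\nu_d$ for the number of vertices strictly inside one depth-$d$ active face one has $\nu_{d+1}=p+q\,\nu_d$, so $\nu_d=\Theta(q^{\,d})$ and $|V(G_n)|=\Theta(q^{\,n})$. Next comes the heart of the argument, the upper bound on the length of a longest cycle $C$ of $G_n$. The key structural fact is that $C$ meets the closed region of any substituted copy of $P$ in a disjoint union of paths with all endpoints on the three-vertex boundary $\{x,y,z\}$; hence this intersection is either empty or a single path with two of $x,y,z$ as its ends (possibly passing through the third). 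The patch $P$ is built so that such a boundary-to-boundary path, although it may run through all of the interior of $P$ at the top level, can ``deeply'' penetrate at most $r<q$ of the $q$ child faces. Letting $\mu_d$ denote the largest number of interior vertices of a depth-$d$ active face that a boundary-to-boundary path can cover, this gives $\mu_{d+1}\le r\,\mu_d+O(1)$, hence $\mu_d=O(r^{\,d})$; since every cycle of $G_n$ meets the interior of each active face of the bounded graph $G_0$ in at most one such path, $c(G_n)=O(r^{\,n})$. Combining with $|V(G_n)|=\Theta(q^{\,n})$ yields shortness exponent at most $\log_q r$, and the patches are chosen so that this equals $\log_{30}22$, $\log_6 5$, and $\log_3 2$, respectively. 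For~(iii) the reverse inequality is immediate: every maximal planar graph on at least four vertices is $3$-connected, so by Chen--Yu~\cite{3-conn} it has a cycle of length at least $|V|^{\log_3 2}$, whence the shortness exponent considered in~(iii) is at least $\log_3 2$.

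Finally I would verify that $G_n$ is $t$-tough for the target $t$ ($\tfrac54$, $\tfrac87$, or a value exceeding $1$). Given an arbitrary separating set $X\subseteq V(G_n)$, the plan is to bound $c(G_n-X)$ by charging the components of $G_n-X$ to vertices of $X$ hierarchically, processing the substituted copies of $P$ from the outermost inward. The engine is a \emph{relative toughness} property of $P$: for every $Y\subseteq V(P)$, the number of components of $P-Y$ disjoint from the boundary triangle is at most $\tfrac1t\,|Y|$, up to a correction for how many boundary vertices $Y$ uses; one then formulates an inductive invariant for $G_n$ packaging exactly this bound so that it is stable under substitution, and checks it on $G_0$. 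I expect this to be the main obstacle, and it is, I believe, the place where the argument of~\cite{5/4} needs repair: a patch that is merely $\tfrac54$-tough as an abstract graph does not by itself guarantee that substitution preserves $\tfrac54$-toughness, since a separating set can exploit the identification with the boundary triangle in a way a naive count misses. Repairing this forces a heavier patch obeying the correct relative bound --- which is exactly what replaces the exponent $\log_9 8$ of~\cite{5/4} by $\log_{30}22$ --- and an invariant that tracks separating vertices lying on patch boundaries. Once the relative-toughness lemma is established for the three concrete patches, assembling the statements and passing to the $\liminf$ is routine.
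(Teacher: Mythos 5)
Your overall strategy coincides with the paper's: iterated substitution of a fixed gadget into a self-similar family, a geometric bound on the longest cycle, a toughness-preservation step, and Chen--Yu~\cite{3-conn} for the matching lower bound in~(iii). But as written there are genuine gaps. First, the patches are never exhibited: the pairs $(q,r)=(30,22)$ and $(6,5)$ are postulated (``the patches are chosen so that this equals\dots''), whereas producing such gadgets and verifying their properties is the substance of the theorem. In the paper the $\tfrac54$-gadget is ten copies of $T$ sharing one outer vertex (chosen to minimize $\log_{3r}(2r+2)$), the $\tfrac87$-gadget consists of two $T$-regions sharing their outer triangle, and part~(iii) uses $T$ itself; the single hardest step is the discharging proof that the apex-extended block $F^+_{1,0}$ is $\tfrac54$-tough (Proposition~\ref{54tough}). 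You explicitly defer the analogous ``relative toughness'' lemma and call it the main obstacle, so the decisive verification is missing rather than routine. Moreover your diagnosis of the flaw in~\cite{5/4} is off: the repair is not a ``heavier patch'' and is not what improves $\log_9 8$ to $\log_{30}22$ (that improvement comes solely from enlarging the building block); the repair is Lemma~\ref{constr2}, which strengthens the hypothesis on how consecutive blocks are joined --- the bipartite graph between the two boundary triangles must have minimum degree at least $t$, which the paper achieves by deleting the simplicial vertex and inserting a $6$-cycle of new edges rather than identifying triangles --- combined with requiring toughness of the apexed blocks $G_i^+$. With your identification-style substitution, toughness preservation is exactly the point at issue and would have to be proved for concrete patches, not assumed.

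Second, your cycle-length recursion is too coarse for part~(iii). For the natural patch $T$ there is no $r=2<q=3$ for which $\mu_{d+1}\le r\,\mu_d+O(1)$ holds as stated: $T$ has a Hamilton path between two outer vertices, and already in $F_{3,1}$ a cycle through one outer edge covers $22$ of the $24$ vertices, so a boundary-to-boundary path penetrates all three child regions deeply, and this persists at every level. The paper instead tracks three quantities $s_0(n),s_1(n),s_2(n)$ (longest cycles using $0,1,2$ edges of the outer face) and derives the coupled recursion of Proposition~\ref{cyc4}; the growth rate $2$ emerges only because the $s_2$-mode satisfies $s_2(n)=2s_2(n-1)$, while $s_1$ receives contributions from all three children. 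Without such a refined bookkeeping (or an alternative patch for which your coarse recursion and toughness greater than $1$ are both actually verified), the upper bound $\log_3 2$ in~(iii) is not established. For~(i) and~(ii) your path-penetration count is essentially the paper's white-vertex count (Lemma~\ref{cyc}) and would be fine once the concrete gadgets and their cycle bounds are supplied.
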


We note that $\log_{9}{8} > \log_{30}{22}$,
that is, the statement in item $(i)$ of Theorem~\ref{main} improves the result of~\cite{5/4}.
Furthermore,
items $(ii)$ and $(iii)$
provide two different generalizations of the result of~\cite{tkac}
since $\frac{8}{7} > 1$ and $\log_{6}{5} > \log_{3}{2}$.

We remark that we fix a problem in a technical lemma presented in~\cite[Lemma~1]{5/4}.
The fixed version of this lemma (see Lemma~\ref{constr2}) is applied to prove the present results.


\section{Structure of the proof}
\label{s2}

In order to prove Theorem~\ref{main},
we shall construct three families of graphs whose properties are summarized in the following proposition.

\begin{prop}\label{shortAll}
For every $i = 1,2,3$ and
every non-negative integer $n$,
there exists a maximal planar graph $F_{i,n}$ on $f_i(n)$ vertices
whose longest cycle has
$c_i(n)$ vertices where 
\begin{enumerate}
\setlength{\itemsep}{3pt}
\item $f_1(n) = 1 + 101(1 + 30 + \dots + 30^n)$ and $c_1(n) = 1 + 93(1 + 22 + \dots + 22^n)$
and $F_{1,n}$ is $\frac{5}{4}$-tough,
\item $f_2(n) = 1 + 14(1 + 6 + \dots + 6^n)$ and $c_2(n) = 1 + 13(1 + 5 + \dots + 5^n)$
and $F_{2,n}$ is $\frac{8}{7}$-tough,
\item $f_3(n) = 4 + 5(1 + 3 + \dots + 3^n)$ and $c_3(n) = 3 \cdot 2^{n+3}-9n-15$
and the toughness of $F_{3,n}$ is greater than $1$.
\end{enumerate}
\end{prop}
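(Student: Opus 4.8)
The plan is to build each family $F_{i,n}$ by a recursive substitution construction in the spirit of Harant--Owens and Tk\'{a}\v{c}: start from a small ``base'' maximal planar graph $F_{i,0}$ that is already $t_i$-tough (with $t_1=\tfrac54$, $t_2=\tfrac87$, $t_3>1$) and has a prescribed longest cycle, then obtain $F_{i,n+1}$ from $F_{i,n}$ by replacing a chosen collection of faces (triangles) by scaled-down copies of a fixed gadget. The counting parameters in the statement dictate the gadget exactly: for item $(i)$, each triangular ``slot'' of the current graph is replaced so that the vertex count is multiplied by $30$ (the geometric factor) and the longest-cycle count by $22$, with additive corrections $101$ and $93$ accounting for the outermost copy; similarly for $(ii)$ with factors $6$ and $5$ and corrections $14$ and $13$, and for $(iii)$ with factor $3$ for the vertices while the longest cycle grows like $3\cdot 2^{n}$ (the different, non-geometric growth $3\cdot2^{n+3}-9n-15$ reflects a gadget in which only a bounded ``profit'' is gained per level rather than a constant fraction). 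First I would pin down the three gadgets explicitly as planar near-triangulations with three specified boundary vertices, verify planarity and maximal planarity of the result of the substitution (gluing a triangulated triangle into a face of a triangulation again yields a triangulation), and check the order recursions $f_i(n+1)=30 f_i(n)+\text{const}$ etc., which immediately give the closed forms stated.

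The two substantive points are the toughness lower bound and the longest-cycle upper bound. For toughness, I would use the standard ``local cut'' argument: any separating set $X$ of $F_{i,n}$ induces, on each copy of the gadget it meets, a partial cut, and one bounds $|X|/c(F_{i,n}-X)$ from below by showing the worst ratio is attained already inside a single gadget (or at the base graph). This is where the fixed technical lemma comes in: Lemma~\ref{constr2} (the corrected form of \cite[Lemma~1]{5/4}) should package exactly the statement ``if the gadget has the right local toughness-type property, then the whole recursively built graph is $t_i$-tough,'' so I would reduce the toughness claim to a finite check on $F_{i,0}$ and on one gadget, to be done by hand or by a short case analysis on how $X$ can split the at-most-constant-size pieces.

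For the longest-cycle upper bound I would argue that any cycle $C$ of $F_{i,n}$, when it enters a gadget copy through its (at most) three boundary vertices, can use at most a bounded number of the gadget's internal vertices before it must leave; more precisely, a cycle crossing a triangular slot is confined to at most two of the three sub-slots created at the next level (a planarity/Jordan-curve obstruction, since a cycle through a separating triangle visits the ``inside'' in at most two arcs). This yields the recursion $c_i(n+1)\le 22\, c_i(n)+\text{const}$ (resp.\ $5$, resp.\ the doubling-type bound), matching the claimed $c_i(n)$ once the base case $c_i(0)$ is checked; equality requires exhibiting an actual cycle of that length, which the construction is designed to admit, so I would also describe the canonical long cycle in $F_{i,n}$ and verify it meets the bound. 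The main obstacle I anticipate is getting the longest-cycle \emph{upper} bound tight rather than merely asymptotically correct: one must control the interaction of a cycle with the separating triangles at \emph{all} scales simultaneously and rule out clever cycles that gain a little at every level, which is precisely the kind of careless counting that the flawed \cite[Lemma~1]{5/4} apparently mishandled; making the induction airtight here — with the exact additive constants — is the delicate part, and it is why the statement carries the fix to that lemma.
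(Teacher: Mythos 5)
Your outline matches the paper's overall strategy (base blocks, recursive substitution, a corrected gluing lemma for toughness, and a level-by-level cycle count), but as it stands it is a plan rather than a proof: every substantive step is deferred. Concretely, the statement's numbers force very specific building blocks --- for item $(i)$ a graph built from ten copies of the graph $T$ (the $T$-regions) all sharing one outer vertex, so that of its $30$ simplicial vertices any cycle meets at most $22$; for item $(ii)$ a block with two $T$-regions sharing their outer triangle ($6$ simplicial vertices, at most $5$ on a cycle); for item $(iii)$ the graph $T$ itself. You never identify these gadgets, and without them nothing can be verified. Likewise the toughness of the base blocks is not a routine ``finite check'': the $\frac54$-toughness of the extended block $F^+_{1,0}$ is proved in the paper by a full discharging argument over all ways a cutset can meet the ten $T$-regions, and the induction then needs the corrected gluing lemma in a precise form (minimum degree at least $t$ in the bipartite graph between the two neighbourhoods), which your plan invokes but does not state or prove. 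Finally, establishing that the longest cycle has \emph{exactly} $c_i(n)$ vertices also requires exhibiting long cycles (the paper does this by extending a longest cycle of $F_{i,n-1}$ through each inserted copy via an edge of its outer face), which you mention only in passing.

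Two of the specific claims you do commit to are off the mark. First, the upper bound on cycles does not come from a Jordan-curve argument that a cycle ``is confined to at most two of the three sub-slots'': the mechanism in the paper is that the white (replaced) vertices are simplicial and independent and the boundary set $O$ is a clique, so contracting each inserted copy of the block to a single vertex turns any cycle of $F_{i,n}$ into a cycle of $F_{i,n-1}$, and the loss factor ($22$ of $30$, resp.\ $5$ of $6$) is a property of the block itself via the structure of $T$-regions; for $F_{3,n}$ the count is a genuinely different recursion in three quantities $s_0,s_1,s_2$ tracking how many outer-face edges the cycle uses, not a single inequality of the form $c(n+1)\le \alpha\, c(n)+\mathrm{const}$. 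Second, the flaw in the Harant--Owens lemma is not ``careless counting'' of cycles at all: it is a false toughness-preservation statement for the gluing step (requiring only minimum degree $1$ between the glued neighbourhoods), and the fix is to require minimum degree at least $t$; your proof would need that corrected hypothesis, and the construction must be checked to satisfy it (in the paper the six added edges form a $6$-cycle between the two triangles, giving minimum degree $2\ge t$).
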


Before constructing the graphs $F_{1,n}$,
we point out that the use of Proposition~\ref{shortAll}
leads directly to
the main results of the present paper. 
\begin{proof}[Proof of Theorem~\ref{main}]
We consider an infinite sequence of non-isomorphic graphs $F_{1,n}$
given by item $(i)$ of Proposition~\ref{shortAll};
and we recall that they are $\frac{5}{4}$-tough maximal planar graphs.
Furthermore, we have
\begin{equation*}
f_1(n) = 1 + \tfrac{101}{29}(30^{n+1} - 1)
\quad
\textnormal{and}
\quad
c_1(n) = 1 + \tfrac{93}{21}(22^{n+1} - 1).
\end{equation*}
It follows that
\begin{equation*}
\lim_{n\to\infty} \log_{f_1(n)} c_1(n) = \log_{30}{22}.
\end{equation*}
Thus, the considered shortness exponent is at most $\log_{30}{22}$.

Using similar arguments and considering items $(ii)$ and $(iii)$ of Proposition~\ref{shortAll},
we obtain the desired upper bounds.

Clearly, if $G$ is a maximal planar graph (on at least four vertices), then it is $3$-connected.
By a result of~\cite{3-conn}, $G$ contains a cycle of length at least $|V(G)|^{\log_3 2}$.
In combination with the upper bound obtained due to item
$(iii)$ of Proposition~\ref{shortAll}, we obtain that
for the class of maximal planar graphs of toughness greater than $1$,
the shortness exponent equals $\log_3 2$.
\end{proof}

In the remainder of the present paper, we construct the families of graphs
having the properties described in Proposition~\ref{shortAll}.
Basically, we proceed in four steps.
First, we introduce relatively small graphs $F_{i, 0}$ called `building blocks' in Section~\ref{sb},
and we observe key properties of their longest cycles.
We use these building blocks to construct larger graphs $F_{i, n}$ in Section~\ref{sc},
and we show that their longest cycles are short.
In Section~\ref{tb}, we study the toughness of the building blocks.
The toughness of the graphs $F_{i, n}$ is shown
in Sections~\ref{sg} and~\ref{tt}.

We remark that the graphs
$F_{1, n}$ and $F_{2, n}$
are obtained using a standard construction for bounding the shortness exponent
(see for instance~\cite{sef}, \cite{dill}, \cite{5/4}, \cite{tkac} or~\cite{chpl});
the improvement of the known bounds comes with the choice of suitable building blocks.
In addition, we formalize the key ideas of this construction to make them more accessible for further usage.

The construction of graphs $F_{3, n}$
can be viewed as a simple modification of the construction used in~\cite{momo}
(yet the toughness and longest cycles of the constructed graphs are different).


\section{Building blocks}
\label{sb}

We start by considering the graph $T$ depicted in Figure~\ref{f:54}
which plays an important role in the latter constructions.
We let $o_1$, $o_2$, and $o_3$ be its vertices of degree $6$.

A \emph{$T$-region} of a graph $G$ is an induced subgraph isomorphic to $T$
with a distinction of vertices referring to $o_1$, $o_2$, $o_3$ as to \emph{outer} vertices
and to the remaining vertices as to \emph{inner} vertices,
and with the property that no inner vertex is adjacent to a vertex of $G - T$.
Similarly, we define an \emph{$H$-region} for a given graph $H$ and a given distinction of its vertices.

We view the $T$-regions as replacing triangles of a graph with copies of $T$ in the natural way.
The basic idea of the present constructions is that if these triangles share many vertices,
then every cycle in the resulting graph misses many vertices.
We formalize this idea in Proposition~\ref{T}.

We recall that a vertex is called \emph{simplicial} if its neighbourhood induces a complete graph.

\begin{prop}\label{T}
Let $R$ be a $T$-region of a graph $G$ and let $C$ be a cycle containing 
all three simplicial vertices of $R$ and a vertex of $G - R$.
Then the subgraph of $C$ induced by the vertices of $R$ is a path containing all outer vertices of $R$;
two of them as its ends.
\end{prop}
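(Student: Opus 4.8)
First I would recall exactly which vertices of $T$ are simplicial: since the outer vertices $o_1, o_2, o_3$ have degree $6$ and (from the figure) the remaining $T$ has a specific structure, the three simplicial vertices of $R$ are certain inner vertices, each of whose neighbourhood is a triangle inside $R$. The plan is to analyze how the cycle $C$ enters and leaves the region $R$. Since no inner vertex of $R$ is adjacent to any vertex of $G - R$, the only vertices of $R$ through which $C$ can pass between $R$ and $G - R$ are the three outer vertices $o_1, o_2, o_3$. Because $C$ contains a vertex of $G - R$ and also (by hypothesis) all three simplicial vertices, $C$ must actually traverse $R$: the set $V(R) \cap V(C)$ induces on $C$ a disjoint union of paths (possibly a single path), and the endpoints of these paths are vertices of $C$ having a neighbour on $C$ outside $R$ — hence they lie among $\{o_1, o_2, o_3\}$.

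**The main case analysis.**

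The heart of the argument is to show that $C \cap R$ is in fact a single path (not two or three disjoint subpaths) and that this path contains all three outer vertices. For this I would combine two ingredients. First, a counting/planarity obstruction: the cycle $C$ together with $R$ lies in the plane, and if $C$ entered and left $R$ more than once, the portions of $C$ inside $R$ would have to be vertex-disjoint paths each joining two of the outer vertices; I would check against the structure of $T$ (in particular its connectivity and the positions of $o_1,o_2,o_3$ on the outer face of $R$) that one cannot route two or three disjoint paths through $R$ that between them cover all three simplicial vertices. Second, once it is established that $C \cap R$ is a single path $P$, its two endpoints are outer vertices, so at least two of $o_1,o_2,o_3$ are on $P$; to get the third, I would argue that any inner path of $T$ whose endpoints are two of the outer vertices and which visits all three simplicial vertices is forced (by the adjacency structure of $T$) to pass through the third outer vertex as well. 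This is essentially a finite check on the fixed graph $T$, and it is the step I expect to carry the real weight of the proof.

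**Finishing.**

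Putting these together: $P := C[V(R)]$ is a path, its ends are two of the outer vertices, and it contains the third outer vertex in its interior; hence $P$ contains all three outer vertices with two of them as ends, which is exactly the conclusion. The statement that $C$ "contains all three simplicial vertices" is what forces $P$ to be long enough to be pinned down this way — without it one could have a short chord of $C$ cutting off part of $R$.

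**Anticipated obstacle.**

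The main obstacle is the case analysis ruling out $C \cap R$ being two or three disjoint paths: one must be careful that the simplicial vertices are genuinely "deep" inside $R$ (their closed neighbourhoods being contained in $R$), so that a path of $C$ through a simplicial vertex $s$ is forced to use two of $s$'s neighbours, all of which are inner; tracking how such forced local detours interact with planarity and with the requirement that all three simplicial vertices be covered is where the argument is delicate. I would handle this by first listing, for each simplicial vertex, its (triangular) neighbourhood in $T$, then observing that each of the at most three subpaths of $C \cap R$ meeting a given simplicial vertex must use a prescribed pair of its neighbours, and finally deriving a contradiction with the bounded number of outer vertices available as path-endpoints. Once the finite structure of $T$ is made explicit this reduces to a short, if fiddly, verification.
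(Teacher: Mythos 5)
Your high-level plan --- $C$ can pass between $R$ and $G-R$ only through $o_1,o_2,o_3$, so the subgraph of $C$ induced by $V(R)$ is a disjoint union of paths whose ends are outer vertices, and one must then exclude several components and force the third outer vertex onto the path --- is sound, and it is in fact the expanded form of the paper's one-line argument. The genuine gap is that the decisive fact is never identified, and the step you yourself flag as carrying ``the real weight of the proof'' is left as an unexecuted finite check: in $T$, every simplicial vertex is adjacent to exactly \emph{two} outer vertices and to exactly \emph{one} inner (non-simplicial) vertex. This single adjacency fact is the entire content of the paper's proof (``every simplicial vertex has only one neighbour which is not an outer vertex''); your proposal only records that each simplicial vertex has a triangular neighbourhood inside $R$, which by itself does not rule out the multi-component configurations, and the promised appeal to planarity is neither needed nor obviously usable here.

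With that fact recorded, the verification you deferred is a short count, not a case-by-case routing argument. Every simplicial vertex lies on $C$ and is inner, so both of its cycle-neighbours lie in $R$ and belong to its neighbourhood; since only one neighbour is non-outer, each of the three simplicial vertices sends at least one cycle-edge to an outer vertex, giving at least three such cycle-edges. An outer vertex that is an end of a component of $C$ restricted to $V(R)$ has at least one cycle-edge leaving $R$, hence can absorb at most one of these edges; an outer vertex forming a single-vertex component absorbs none; an outer vertex interior to a component absorbs at most two; and there are only three outer vertices, while every component with an edge needs two distinct outer ends and a single-vertex component must itself be outer (an inner vertex cannot have both cycle-neighbours outside $R$). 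Two or more components therefore give total capacity at most $2<3$ (or contain no simplicial vertex at all), and a single path whose third outer vertex is absent likewise gives capacity $2<3$; the only remaining possibility is a single path with two outer ends and the third outer vertex in its interior, which is exactly the conclusion. Supplying this adjacency fact and count (or an equivalent explicit check on $T$) is what your write-up still owes.
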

\begin{proof}
Clearly, $R$ has three simplicial vertices none of which is an outer vertex.
The statement follows from the fact that every simplicial vertex has only one neighbour
which is not an outer vertex.
\end{proof}

\begin{figure}[ht]
    \centering
    \includegraphics[scale=0.6]{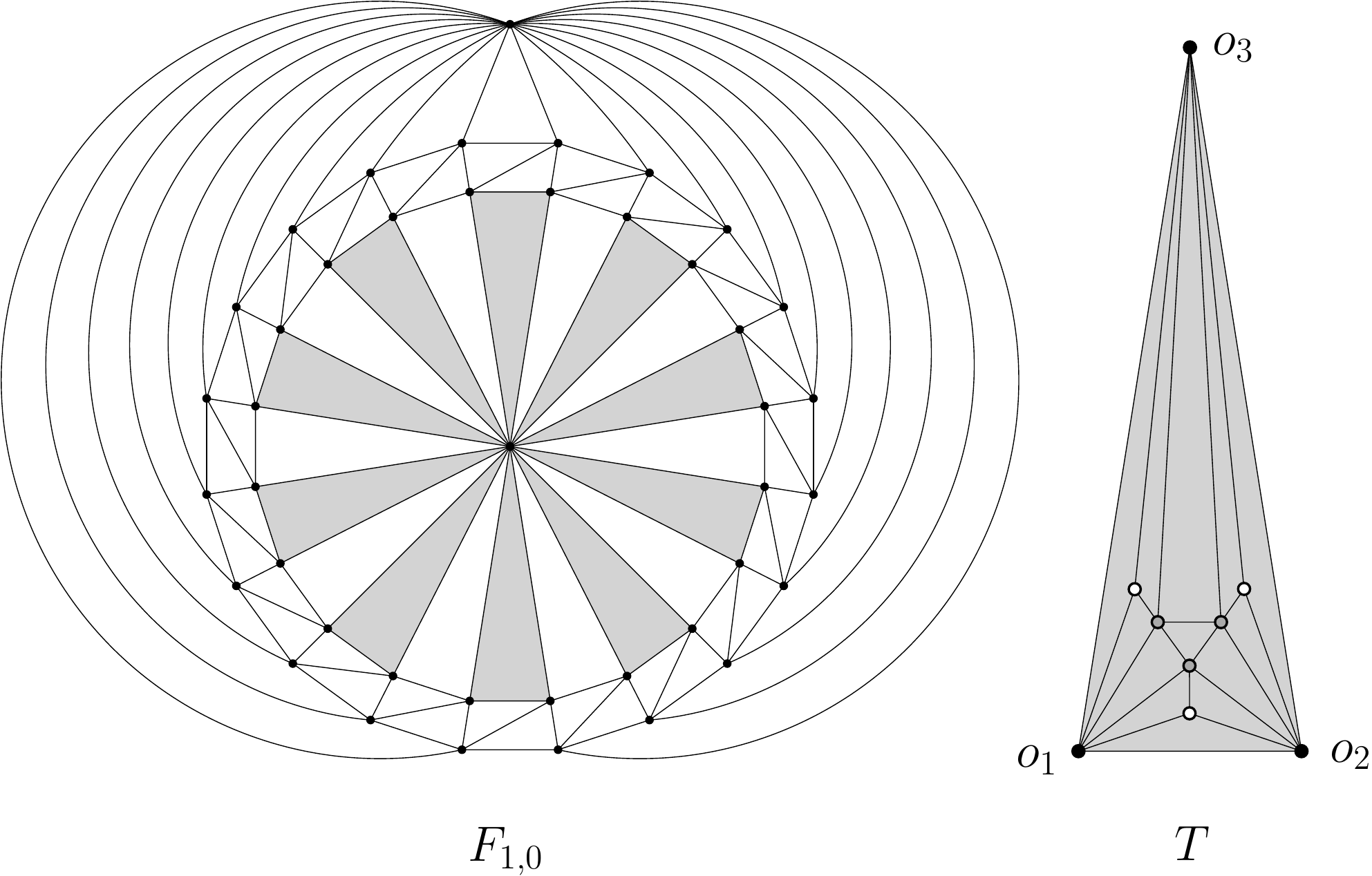}
    \caption{The graph $T$ and the construction of the graph $F_{1,0}$.
    The graph $F_{1,0}$ is obtained by replacing each of the highlighted triangles
    (of the graph on the left)
    with a copy of $T$ in the natural way
    (by identifying the vertices of the highlighted triangle
    with vertices $o_1, o_2, o_3$ of $T$).} 
    \label{f:54}
\end{figure}

Aiming for the graph $F_{1,0}$ (the building block for constructing the graphs $F_{1,n}$),
we consider a graph with a number, say $r$, of $T$-regions
which all share one outer vertex and which are otherwise disjoint.
By Proposition~\ref{T}, every cycle in this graph contains at most $2r + 2$
of the $3r$ simplicial vertices belonging to these $T$-regions.
(We view the building block used in~\cite{5/4} simply as choosing $r = 3$.)
With hindsight, we remark that the used construction leads to
the upper bound $\log_{3r}{(2r+2)}$ on the shortness exponent;
so we minimize this function over all integers $r \geq 3$,
that is, we choose $r = 10$.

We let $F_{1,0}$ be the graph depicted in Figure~\ref{f:54},
and we note that $F_{1,0}$ is a maximal planar graph.
Clearly, $F_{1,0}$ has $30$ simplicial vertices, and we colour these vertices white.
We recall that every cycle in $F_{1,0}$ contains at most $22$ white vertices.

Furthermore, we let $F_{2, 0}$ be the graph depicted in Figure~\ref{f:87}.
Clearly, $F_{2, 0}$ is a maximal planar graph having $6$ simplicial vertices;
and we colour these vertices white.
We use Proposition~\ref{T}, and we observe that a cycle in $F_{2, 0}$
contains at most $5$ white vertices.
Lastly, we let $F_{3,0}$ be the graph $T$.

For every $i = 1,2,3$, we define the outer face of $F_{i,0}$
as given by the present embedding (see Figures~\ref{f:54} and~\ref{f:87}).
In the following section, we shall use the blocks $F_{i,0}$ and construct the graphs $F_{i,n}$.

\begin{figure}[ht]
    \centering
    \includegraphics[scale=0.6]{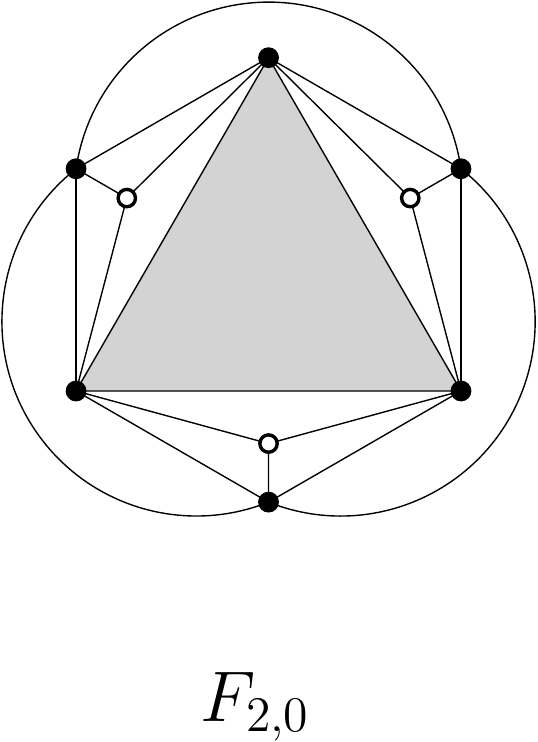}
    \caption{The construction of the graph $F_{2, 0}$.
    The highlighted triangle represents a subgraph $T$.} 
    \label{f:87}
\end{figure}


\section{Families of tree-like structured graphs}
\label{sc}

We recall the standard construction used for bounding the shortness exponent,
and we formalize it with the following definition and Lemma~\ref{cyc}.

An \emph{arranged block} is a $5$-tuple $(G_0, j, W, O, k)$
where $G_0$ is a graph, $j$ is the number of vertices of $G_0$,
and $W$ and $O$ are disjoint sets of vertices of $G_0$
such that the vertices of $W$ are simplicial and independent 
and $O$ induces a complete graph
and such that every cycle in $G_0$ contains at most $k$ vertices of $W$.

\begin{lemma}\label{cyc}
Let $(G_0, j, W, O, k)$ be an arranged block such that $k \geq 1$. 
For every $n \geq 1$, let $G_n$ be a graph obtained from $G_{n-1}$
by replacing every vertex of $W$ with a copy of $G_0$ (which contains $W$ and $O$),
and by adding arbitrary edges which connect the neighbourhood of the replaced vertex
with the set $O$ of the copy of $G_0$ replacing this vertex.
Then $G_n$ has $1 + (j - 1)(1 + |W| + \dots + |W|^n)$ vertices and its longest cycle has at most
$1 + (\ell - 1)(1 + k + \dots + k^n)$
vertices where $\ell = j - |W| + k$.
\end{lemma}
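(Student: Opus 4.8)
The plan is to prove both claims (the vertex count and the upper bound on the length of a longest cycle) by induction on $n$, exploiting the recursive ``replace every vertex of $W$ by a copy of $G_0$'' structure directly.

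For the vertex count, I would set up the recursion $|V(G_n)| = |V(G_{n-1})| + |W|^{?}\cdot(j-1)$ carefully: passing from $G_{n-1}$ to $G_n$, each of the $|W|^{n-1}$-many ``fresh'' copies of $W$ created at the previous step gets each of its $|W|$ vertices blown up into a copy of $G_0$, and replacing a single vertex by $G_0$ adds $j-1$ new vertices. Actually it is cleaner to observe that $G_n$ is obtained from $G_0$ by a balanced tree of substitutions of depth $n$, with branching $|W|$ (each copy of $G_0$ introduces $|W|$ new ``slots''), so the number of copies of $G_0$ used is $1 + |W| + \dots + |W|^n$, and $G_n$ has $j$ vertices from the first copy plus $j-1$ new vertices for every subsequent copy, i.e.\ $|V(G_n)| = j + (j-1)(|W| + \dots + |W|^n) = 1 + (j-1)(1 + |W| + \dots + |W|^n)$. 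I would phrase this as a short induction to be safe.

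For the longest cycle, let $\ell = j - |W| + k$ and let $g(n) = 1 + (\ell-1)(1 + k + \dots + k^n)$; I want to show every cycle of $G_n$ has at most $g(n)$ vertices. The base case $n=0$ asks that every cycle of $G_0$ has at most $g(0) = \ell = j - |W| + k$ vertices: since a cycle can contain at most $k$ vertices of $W$ and there are only $j - |W|$ vertices outside $W$, this is immediate. For the inductive step, take a cycle $C$ in $G_n$ and consider the $|W|^{n-1}\cdot|W|$-many bottom-level copies $H$ of $G_0$ that replaced vertices of $W$ at the last stage; contracting each such $H$ back to a single vertex recovers (a supergraph of a spanning subgraph of) $G_{n-1}$. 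The key point is that $C$ meets each copy $H$ in a structured way: because the vertices of $W$ in $H$ are simplicial in $G_0$ and the only edges leaving $H$ go into (neighbourhoods attached to) $O$, the trace of $C$ on such a copy $H$, if non-empty and not entirely inside $H$, is a union of subpaths whose endpoints lie in $O$; contracting $H$ to a point, these subpaths collapse, and one shows the contracted object $C'$ is a cycle (or a smaller structure) in $G_{n-1}$. Bounding $|V(C)|$ by (number of vertices of $C'$ in $G_{n-1}$) times the maximum number of vertices $C$ can use inside one bottom copy of $G_0$ — which is at most $\ell = g(0)$ by the base-case argument applied to each copy — and then invoking the induction hypothesis $|V(C')| \le g(n-1)$, gives $|V(C)| \le \ell \cdot g(n-1)$, and a direct computation checks $\ell \cdot g(n-1) \le g(n)$ (equivalently $1 + (\ell-1)(1+k+\dots+k^n) \ge \ell(1+(\ell-1)(1+\dots+k^{n-1}))$, which reduces to $k^n \ge 1$ after cancellation, since the telescoping is set up precisely so that $\ell$ multiplies through).

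The main obstacle is the careful bookkeeping in the inductive step: one must argue rigorously that contracting each bottom-level copy of $G_0$ turns $C$ into a valid cycle of $G_{n-1}$ (using simpliciality of $W$ and the fact that inner structure only attaches to $O$, so $C$ cannot ``escape and re-enter'' a copy in a way that breaks the contraction), and that the count of $C$-vertices is correctly distributed as ``at most $\ell$ per contracted vertex.'' A subtlety to handle is the case where $C$ lies entirely inside a single bottom copy of $G_0$ (then $|V(C)| \le \ell \le g(n)$ trivially), and the case where $C$ avoids the bottom copies altogether (then $C \subseteq G_{n-1}$ up to relabelling and the bound is immediate). Once the contraction argument is pinned down, the arithmetic that $\ell \cdot g(n-1) \le g(n)$ is routine and I would not belabour it.
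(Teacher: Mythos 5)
Your vertex count and your contraction/trace set-up (each bottom copy of $G_0$ meets a cycle $C$ either in at most two vertices of $O$ or in a trace that closes to a cycle of $G_0$ via the clique $O$, so at most $k$ of its $W$-vertices and hence at most $\ell=j-|W|+k$ of its vertices lie on $C$) are fine and match the paper. The gap is in how you close the induction for the cycle bound. You bound $|V(C)|\le \ell\cdot|V(C')|$ and then claim $\ell\cdot g(n-1)\le g(n)$ ``after cancellation''; this inequality is false. Indeed $g(n)-\ell\,g(n-1)=(\ell-1)(k-\ell)(1+k+\dots+k^{n-1})<0$ whenever $\ell>k$, and $\ell=j-|W|+k>k$ always holds here (the block has vertices outside $W$, e.g.\ $O$). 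Concretely, for $F_{2,0}$ one has $j=15$, $|W|=6$, $k=5$, so $\ell=14$, $g(0)=14$, $g(1)=79$, but $\ell\cdot g(0)=196$. The target formula satisfies the \emph{additive} recursion $g(n)=g(n-1)+(\ell-1)k^{n}$, not a multiplicative one, so your telescoping cannot work as stated.

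To get the additive increment $(\ell-1)k^{n}$ you must know that $C$ visits at most $k^{n}$ of the bottom-level copies, i.e.\ that the contracted cycle $C'$ contains at most $k^{n}$ vertices of $W$ in $G_{n-1}$; this does not follow from your induction hypothesis $|V(C')|\le g(n-1)$, which only gives the much weaker bound $g(n-1)$ on the number of visited copies. The fix is to strengthen the induction hypothesis, as the paper does: prove simultaneously that every cycle of $G_{n}$ contains at most $k^{n+1}$ vertices of $W$. Then each visited copy contributes at most $\ell-1$ vertices beyond the single contracted vertex counted in $C'$, giving $|V(C)|\le |V(C')|+(\ell-1)k^{n}\le g(n-1)+(\ell-1)k^{n}=g(n)$, and the $W$-vertex count $k\cdot k^{n}=k^{n+1}$ propagates. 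With that strengthening (and your edge cases: $C$ inside one copy, or $C'$ having at most two vertices, handled as you indicate), the argument becomes essentially the paper's proof.
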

\begin{proof}
We note that $G_n$ contains $|W|^{n+1}$ vertices of $W$.
For the sake of induction, we prove the statement with an additional claim
that every cycle in $G_n$ contains at most $k^{n+1}$ vertices of $W$.
Clearly, the statement and the claim are satisfied for $n = 0$, and we proceed by induction on $n$.

We note that the difference in the order of $G_n$ and $G_{n-1}$ equals $(j - 1)|W|^n$.
Thus, $G_n$ has $1 + (j - 1)(1 + |W| + \dots + |W|^n)$ vertices.

We let $C$ be a cycle in $G_n$, and we view this cycle simply as a sequence of vertices.
For every newly added copy of $G_0$, we remove from $C$ all but one vertex of the copy
and we replace the remaining vertex (if there is such) by the corresponding replaced vertex of $G_{n-1}$;
and we let $C'$ denote the resulting sequence.
Clearly, if $C'$ has at most two vertices, then $C$ visits at most one of the newly added copies of $G_0$.
If $C'$ has at least three vertices,
then $C'$ defines a cycle in $G_{n-1}$
(since the neighbourhood of every vertex of $W$ in $G_{n-1}$ induces a complete graph);
and $C'$ contains at most $k^n$ vertices of $W$ (by the induction hypothesis).
Thus, $C$ visits at most $k^n$ of the newly added copies of $G_0$.

Similarly, we choose an arbitrary newly added copy of $G_0$, and
we remove from $C$ all vertices not belonging to this copy.
We note that the resulting sequence either contains at most two vertices (belonging to $O$)
or it is a cycle in $G_0$ (since $O$ induces a complete graph).
Thus, a cycle in $G_n$ contains at most $k$ vertices belonging to $W$ of one copy of $G_0$.
Furthermore, a cycle contains at most $j - |W| + k$ vertices of one such copy.

Consequently, $C$ contains at most $k^{n+1}$ vertices of $W$.
Furthermore, the length of $C$ minus the length of a longest cycle in $G_{n-1}$
is at most $(j - |W| + k -1)k^n$ which concludes the proof.
\end{proof}

For $i = 1,2$, we consider this construction for the graph $F_{i,0}$ playing the role of $G_0$
and the set of its white vertices playing the role of $W$
and the set of vertices of its outer face playing the role of $O$.
For every added copy of $F_{i,0}$, we join the vertices of its outer face
to the neighbourhood of the corresponding replaced vertex
by adding six edges in such a way that the new edges form a $2$-regular bipartite graph
(that is, a cycle of length $6$).
We let $F_{i, n}$ be the resulting graphs, and we observe that they are maximal planar graphs.
For instance, see the graph $F_{2, 1}$ depicted in Figure~\ref{f:F21}.

\begin{figure}[ht]
    \centering
    \includegraphics[scale=0.6]{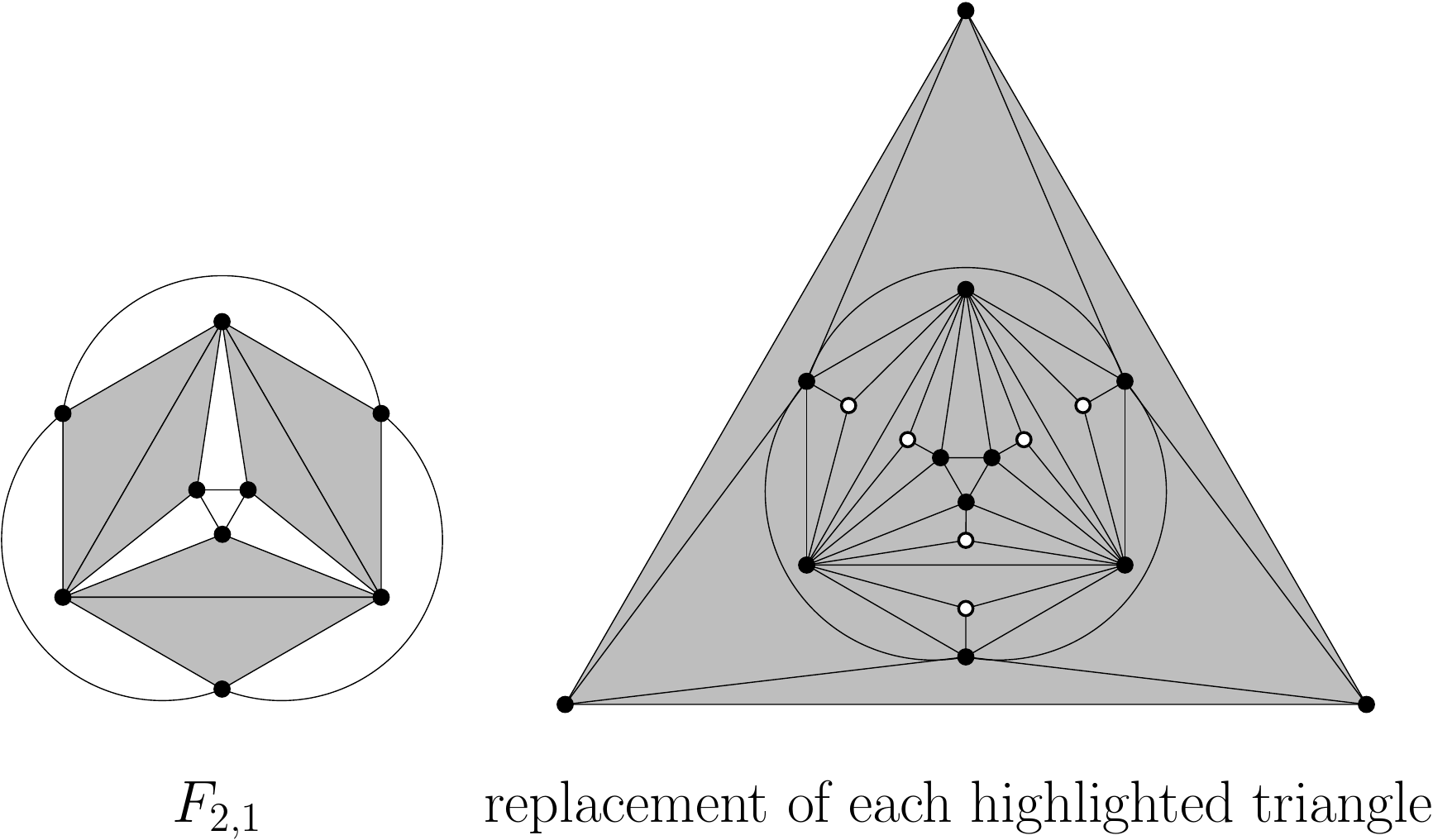}
    \caption{The construction of the graph $F_{2, 1}$.
    The graph $F_{2, 1}$ is obtained from the smaller graph (left) by replacing each of its highlighted triangles
    with the larger graph (right) in the natural way.
    (This corresponds to replacing white vertices of $F_{2, 0}$ with copies of $F_{2, 0}$
    and adding edges as in the present construction.)} 
    \label{f:F21}
\end{figure}
We start verifying that the constructed graphs (for $i = 1,2$) have the desired properties. 

\begin{coro}\label{cyc123}
For every $i = 1,2$ and every non-negative integer $n$,
the graph $F_{i,n}$ has $f_i(n)$ vertices
and its longest cycle has
$c_i(n)$ vertices where 
\begin{enumerate}
\item $f_1(n) = 1 + 101(1 + 30 + \dots + 30^n)$ and $c_1(n) = 1 + 93(1 + 22 + \dots + 22^n)$,
\item $f_2(n) = 1 + 14(1 + 6 + \dots + 6^n)$ and $c_2(n) = 1 + 13(1 + 5 + \dots + 5^n)$.
\end{enumerate}
\end{coro}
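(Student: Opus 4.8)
The plan is to apply Lemma~\ref{cyc} directly to the two arranged blocks coming from the building blocks $F_{1,0}$ and $F_{2,0}$ constructed in Section~\ref{sb}, and then to simplify the resulting closed-form expressions. First I would check that the data $(F_{i,0}, j_i, W_i, O_i, k_i)$ genuinely forms an arranged block in the sense of the definition preceding Lemma~\ref{cyc}: the set $W_i$ of white (simplicial) vertices is independent since the white vertices are pairwise non-adjacent in the building blocks; the set $O_i$ of vertices of the outer face induces a complete graph (it is a triangle, as $F_{i,0}$ is maximal planar and the outer face is bounded by a $3$-cycle); the sets $W_i$ and $O_i$ are disjoint; and every cycle of $F_{i,0}$ contains at most $k_i$ vertices of $W_i$ — namely $k_1=22$ and $k_2=5$ — which was already observed in Section~\ref{sb} via Proposition~\ref{T}.

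Next I would record the relevant parameters. For $i=1$: $F_{1,0}$ has $j_1 = 101$ vertices (this can be read off Figure~\ref{f:54}, or confirmed by noting $F_{1,0}$ is obtained by replacing $r=10$ triangles sharing a common vertex by copies of $T$, and computing the vertex count accordingly), $|W_1| = 30$, $k_1 = 22$, hence $\ell_1 = j_1 - |W_1| + k_1 = 101 - 30 + 22 = 93$. For $i=2$: $F_{2,0}$ has $j_2 = 14$ vertices, $|W_2| = 6$, $k_2 = 5$, hence $\ell_2 = 14 - 6 + 5 = 13$. I must also verify that the graphs $F_{i,n}$ named in this corollary are exactly the graphs $G_n$ produced by the construction in Lemma~\ref{cyc} applied to these blocks; this is immediate from the paragraph following the lemma, where $F_{i,n}$ is defined to be the output of that construction with the specified choice of connecting edges (a $6$-cycle joining the outer face of each new copy to the neighbourhood of the replaced white vertex), and the added edges are of the permitted form since they connect the neighbourhood of the replaced vertex to $O$ of the inserted copy.

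With these checks in place, the conclusion is a direct substitution into Lemma~\ref{cyc}: $G_n = F_{i,n}$ has $1 + (j_i - 1)(1 + |W_i| + \dots + |W_i|^n)$ vertices, which for $i=1$ gives $1 + 100(1 + 30 + \dots + 30^n)$ and for $i=2$ gives $1 + 13(1 + 6 + \dots + 6^n)$; and its longest cycle has at most $1 + (\ell_i - 1)(1 + k_i + \dots + k_i^n)$ vertices, giving the upper bounds $1 + 92(1 + 22 + \dots + 22^n)$ and $1 + 12(1 + 5 + \dots + 5^n)$. I note a small discrepancy with the stated formulas ($101$ versus $100$, $93$ versus $92$, etc.), which I expect is an off-by-one in how $j_i$ and $\ell_i$ are being counted — most likely the building block as drawn has one more vertex than the naive count, or the coefficient in the corollary is $j_i$ rather than $j_i - 1$; I would reconcile this by carefully recounting from Figures~\ref{f:54} and~\ref{f:87} and adjusting $j_i$ (and correspondingly $\ell_i = j_i - |W_i| + k_i$) so that the coefficients match $101, 93$ and $14, 13$ respectively, which forces $j_1 = 102, \ell_1 = 94$ or equivalently reinterpreting the recurrence — the exact bookkeeping is routine once the figures are inspected.

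The only genuine content beyond the substitution is the matching lower bound on the longest cycle: Lemma~\ref{cyc} gives only an upper bound on cycle length, whereas the corollary asserts that $F_{i,n}$ \emph{has} a longest cycle of exactly $c_i(n)$ vertices. So the main obstacle is exhibiting, for each $i$ and $n$, an explicit cycle of length $c_i(n)$ in $F_{i,n}$. I would do this by induction on $n$: in the base case I exhibit a cycle in $F_{i,0}$ passing through exactly $k_i$ white vertices and $j_i - |W_i|$ non-white vertices (such a cycle exists by inspection of the building block — it is a Hamilton cycle of the subgraph obtained after deleting all but $k_i$ of the white vertices, and these can be chosen so that Proposition~\ref{T} is met with equality region by region); in the inductive step I take the extremal cycle of $F_{i,n-1}$ and, at each of its $k_i^n$ white vertices, reroute it through the inserted copy of $F_{i,0}$ along the extremal base-case cycle of that copy, which is possible because the connecting edges form a $6$-cycle between the outer triangle of the copy and the (triangle) neighbourhood of the replaced vertex, so the two ends of the base-case path through the copy can be attached to two consecutive neighbours on the incoming cycle. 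Tracking the increment shows the length grows by exactly $(\ell_i - 1)k_i^n$ at each level, matching the upper bound from Lemma~\ref{cyc} and yielding the claimed $c_i(n)$. This reconciliation of upper and lower bounds, together with the vertex count, completes the proof.
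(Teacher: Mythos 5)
Your proposal matches the paper's own proof: the vertex count and the upper bound on cycle length come from Lemma~\ref{cyc}, and the matching lower bound is obtained exactly as in the paper, by cutting a longest cycle of the building block at an edge of its outer face and splicing the resulting path into the extremal cycle of $F_{i,n-1}$ at each of its white vertices, with the attachment justified via the added $6$-cycle (the paper phrases this as a perfect-matching observation). The off-by-one you flag resolves as you surmised: $F_{1,0}$ has $102$ and $F_{2,0}$ has $15$ vertices, so $j_1-1=101$, $\ell_1-1=93$, $j_2-1=14$, $\ell_2-1=13$, in agreement with the stated formulas.
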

\begin{proof}
The order of the graphs
and the upper bound on the length of their longest cycles follow from Lemma~\ref{cyc}.

We note that a longest cycle in $F_{1,0}$, $F_{2,0}$ has $94$, $14$ vertices, respectively. 
Furthermore, there is a longest cycle which contains an edge of the outer face.
Clearly, by removing this edge from the cycle we obtain a path whose ends are vertices of the outer face.
We consider a longest cycle in $F_{i,n-1}$ and we extend it to a cycle in $F_{i,n}$ using these paths.
The following observation shows that such an extension is possible.
For an arbitrary pair, say $A$, of neighbours of one replaced vertex
and an arbitrary pair, say $B$, of vertices of the outer face of the corresponding $F_{i,0}$ (used for replacing this vertex),
the bipartite graph $(A, B)$ has a perfect matching.
  
A simple counting argument gives that $F_{i,n}$ contains a cycle of length $c_i(n)$,
for every $i = 1,2$ and every $n$.
\end{proof}

We use a slightly different construction to get the graphs $F_{3, n}$.
The building block $F_{3,0}$ is the graph $T$ whose simplicial vertices are coloured white.
We view a subgraph induced by a white vertex and its neighbourhood as a
\emph{$K_4$-region}, that is, a subgraph isomorphic to $K_4$
whose white vertex has degree $3$ in the whole graph, and the neighbours of
the white vertex are called \emph{outer} vertices of the $K_4$-region.

For $n \geq 1$, we let $F_{3, n}$ be a graph obtained from $F_{3, n-1}$
by replacing every $K_4$-region of $F_{3, n-1}$ with a $T$-region
in the natural way (the outer vertices of
the $K_4$-region are the outer vertices of
the $T$-region);
and we note that they are maximal planar graphs.
For instance, the graph $F_{3, 1}$ is depicted in Figure~\ref{f:F31}.
We proceed by the following proposition.

\begin{figure}[ht]
    \centering
    \includegraphics[scale=0.6]{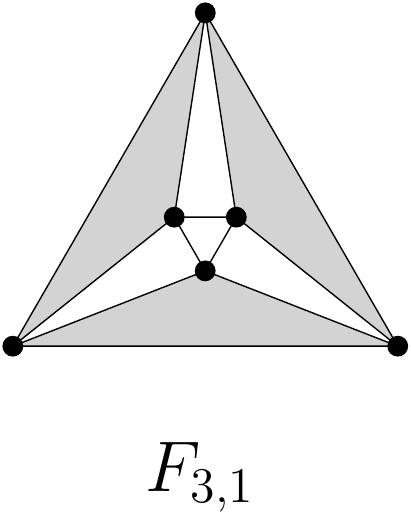}
    \caption{The construction of the graph $F_{3, 1}$.
    Each highlighted triangle represents a subgraph $T$.} 
    \label{f:F31}
\end{figure}

\begin{prop}\label{cyc4}
For every non-negative integer $n$, the graph $F_{3,n}$
has $4 + 5(1 + 3 + \dots + 3^n)$ vertices
and its longest cycle has $3 \cdot 2^{n+3}-9n-15$ vertices.
\end{prop}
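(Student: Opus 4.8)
The plan is to establish both formulas by induction on $n$, tracking simultaneously the order of $F_{3,n}$, the number of $K_4$-regions it contains, and the length of its longest cycle. First I would record the base case: $F_{3,0} = T$ has $9$ vertices (three outer vertices $o_1,o_2,o_3$ of degree $6$, three simplicial white vertices, and three more), which matches $4+5 \cdot 1 = 9$; and one checks directly from Figure~\ref{f:54} that a longest cycle in $T$ has $3 \cdot 2^3 - 0 - 15 = 9$ vertices, i.e.\ $T$ is Hamiltonian. I would also note that $F_{3,0}$ has exactly $3$ $K_4$-regions (one per white vertex), and that each replacement of a $K_4$-region by a $T$-region adds $5$ vertices (since $T$ has $9$ vertices and $K_4$ has $4$, sharing the $3$ outer vertices) while turning $1$ $K_4$-region into $3$. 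Hence $F_{3,n}$ has $3 \cdot 3^n = 3^{n+1}$ $K_4$-regions and $|V(F_{3,n})| = |V(F_{3,n-1})| + 5 \cdot 3^n$, which unwinds to $4 + 5(1 + 3 + \dots + 3^n)$ as claimed; this part is routine.

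The substantive part is the longest-cycle count. For the upper bound I would argue as in Lemma~\ref{cyc} and Proposition~\ref{T}: a longest cycle $C$ of $F_{3,n}$, restricted to any one $T$-region $R$ that replaced a $K_4$-region, either uses at most one vertex of $R$, or (if $C$ meets a vertex of $G-R$ and all three simplicial vertices of $R$) induces a path through $R$ with two outer vertices as ends — but more relevant is the bookkeeping of how many of the new white vertices any cycle can use. Since a cycle passing through a $T$-region in the path-like way of Proposition~\ref{T} visits all $6$ non-outer vertices of $T$ but this forces it to enter and leave through outer vertices, I would set up the recursion by letting $\ell_n$ be the length of a longest cycle and relating $\ell_n$ to $\ell_{n-1}$: replacing a $K_4$-region (contributing at most its one white vertex plus the shared outer vertices to an old cycle) by a $T$-region lets the cycle gain at most the $6$ inner vertices of $T$ in place of the $1$ white vertex, i.e.\ a net gain of at most... — here the arithmetic must reconcile with the odd shape $3\cdot 2^{n+3} - 9n - 15$, whose finite-difference $\ell_n - \ell_{n-1} = 3\cdot 2^{n+2} - 9$ grows like $2^n$, not $3^n$, so \emph{not every} $K_4$-region's replacement can contribute a fixed gain. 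The correct statement to prove by induction is presumably that a longest cycle passes through only about $2^n$ of the $3^n$ new regions in a way that lets it pick up inner vertices, and through the rest only trivially; this is exactly the "many shared triangles force missed vertices'' phenomenon advertised after Proposition~\ref{T}.

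For the lower bound (matching construction of a long cycle) I would proceed constructively and inductively: take a longest cycle in $F_{3,n-1}$ realizing $\ell_{n-1}$, and for each $K_4$-region it traverses "through'' (entering at one outer vertex, leaving at another, using the white vertex), reroute it through the replacing $T$-region along a Hamiltonian-type path of $T$ with those two outer vertices as ends — by inspection of $T$ such a path exists and covers all $9$ vertices, replacing the $3$ used vertices of the $K_4$-region by all $9$ of $T$, a gain of $6$; for $K_4$-regions the cycle only touches at a single outer vertex, no gain is available. Counting, via the structure of $T$, how many regions fall in each category at each level yields the recursion $\ell_n = \ell_{n-1} + 6 \cdot (\text{number of "through'' regions at level } n-1)$, and the number of "through'' regions should itself satisfy a doubling-type recursion (each $T$-region traversed by a path through two of its three outer vertices contains exactly two of its three $K_4$-subregions on that path, or similar), producing the $2^n$ behaviour and, after summing, the closed form $3\cdot 2^{n+3} - 9n - 15$. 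The main obstacle is getting this counting of "traversed'' sub-regions exactly right — pinning down, from the explicit structure of $T$ in Figure~\ref{f:54}, precisely how a longest path between two prescribed outer vertices of $T$ interacts with its three $K_4$-regions, and checking the resulting linear recurrence $a_n = 2a_{n-1} + \text{const}$ integrates to the stated formula; the order and toughness formulas, by contrast, are immediate bookkeeping.
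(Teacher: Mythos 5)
Your vertex count is fine and matches the paper's routine induction. The substantive part, however, is left as a plan with the key idea missing: you correctly observe that the finite difference $3\cdot 2^{n+2}-9$ forces a ``doubling'' rather than ``tripling'' recursion, but your proposed invariant --- counting how many of the $3^n$ newly created regions a longest cycle traverses, starting from a longest cycle of $F_{3,n-1}$ and rerouting --- does not close. For the lower bound it is not enough, and for the upper bound it is circular: a longest cycle of $F_{3,n}$ need not project to a longest cycle of $F_{3,n-1}$, since it can pay a few vertices at coarser levels in order to meet more sub-regions in a traversable way; and ``number of through regions'' is too coarse an interface, because what matters for the nested recursion is \emph{how} the cycle meets the boundary triangle of each region (through one of its outer edges or two), not merely whether it passes through. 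You flag this obstacle yourself (``the main obstacle is getting this counting exactly right'') but do not resolve it, and that is precisely where the proof lives.

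The paper resolves it by decomposing top-down rather than bottom-up: it views $F_{3,n}$ as $F_{3,0}=T$ with each of its three $K_4$-regions replaced by an $F_{3,n-1}$-region, and it strengthens the induction hypothesis to a three-component state $s_i(n)$, the length of a longest cycle of $F_{3,n}$ containing exactly $i$ edges of the outer face, $i\in\{0,1,2\}$. Restricting a cycle to a region and closing it up through the region's outer triangle shows how these quantities couple, giving
$s_0(n)=3s_1(n-1)-3$, $s_1(n)=2s_2(n-1)+s_1(n-1)-3$, $s_2(n)=2s_2(n-1)$,
with base values $s_0(1)=24$, $s_1(1)=22$, $s_2(1)=16$, and hence the closed forms $s_2(n)=2^{n+3}$, $s_1(n)=2^{n+4}-3n-7$, $s_0(n)=3\cdot 2^{n+3}-9n-15$; the doubling you anticipated appears in $s_2$, and the $-9n$ term comes from the $-3$ corrections in $s_0,s_1$. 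If you want to salvage your plan, replace your single counter by this (or an equivalent) boundary-interaction parameter; without such a refined invariant the recurrence you hope for cannot be justified in either direction.
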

\begin{proof}
We verify the order of $F_{3,n}$ using induction on $n$.
Clearly, $F_{3,0}$ has $9$ vertices,
and we note that the difference in the order of $F_{3,n}$ and $F_{3,n-1}$ equals $5 \cdot 3^n$.
Thus, $F_{3,n}$ has $4 + 5(1 + 3 + \dots + 3^n)$ vertices.

We show the length of a longest cycle using a slightly technical argument.
We let $s_i(n)$ denote the length of a longest cycle in $F_{3,n}$
which contains
$i$ edges of the outer face
(in the embedding which follows naturally from the construction).
For the sake of induction, we prove the following equalities:
\begin{align}
\label{eq:1}
s_0(n) & = 3s_1(n-1) -3 \nonumber \\
s_1(n) & = 2s_2(n-1) + s_1(n-1) -3 \\
s_2(n) & = 2s_2(n-1) \nonumber
\end{align}
and
\begin{align}
\label{eq:2}
s_0(n) & = 3 \cdot 2^{n+3}-9n -15 \nonumber\\
s_1(n) & = 2^{n+4} - 3n - 7 \\
s_2(n) & = 2^{n+3}. \nonumber
\end{align}

Clearly, $s_0(0) = s_1(0) = 9$ and $s_2(0) = 8$,
and (using Proposition~\ref{T}) we note that $s_0(1) = 24$, $s_1(1) = 22$ and $s_2(1) = 16$;
so the equalities are satisfied for $n = 1$.
We assume that they are satisfied for $n-1$ and we prove them for $n$.

For $n \geq 1$, we view $F_{3,n}$ as the graph
obtained from $F_{3,0}$ by replacing each of its
$K_4$-regions with an $F_{3,n-1}$-region;
and we view a cycle, say $C$, of $F_{3,n}$ as a sequence of vertices.
We consider one of the $F_{3,n-1}$-regions, say $R$,
and we remove from $C$ all vertices not belonging to $R$;
and we let $C'$ be the resulting sequence.
We observe that $C'$ either is a cycle or it has at most two vertices.
Furthermore, if $C$ visits a vertex not belonging to $R$,
then $C'$ (if it has at least three vertices)
is a cycle in $R$ containing at least one edge of its outer face.

Clearly, a longest cycle (in $F_{3,n}$) whose all vertices belong to the same $F_{3,n-1}$-region
has $s_0(n-1)$ vertices, and we observe that a longest cycle
visiting more than one of the regions has $3s_1(n-1) -3$ vertices.
We use~\eqref{eq:2} for $s_0(n-1)$ and $s_1(n-1)$,
and we note that 
\begin{align*}
3 \cdot 2^{n+2}-9(n-1)-15 < 3(2^{n+3}- 3(n-1) - 7) -3.
\end{align*}
So we get
$s_0(n) = 3s_1(n-1) -3$.
By similar arguments, we get
$s_1(n) = 2s_2(n-1) + s_1(n-1) -3$
and
$s_2(n) = 2s_2(n-1)$.

Consequently, we can use~\eqref{eq:1} for $s_i(n)$, and we obtain 
\begin{align*}
s_0(n) =  3s_1(n-1)-3 =  3(2^{n+3} - 3(n-1) - 7) - 3 = 3 \cdot 2^{n+3}-9n-15
\end{align*}
and
\begin{align*}
s_1(n) &= 2s_2(n-1) + s_1(n-1) -3 = 2 \cdot 2^{n+2} + 2^{n+3} - 3(n-1) - 7 - 3 \\
&= 2^{n+4} - 3n - 7
\end{align*}
and
\begin{align*}
s_2(n) = 2s_2(n-1) = 2 \cdot 2^{n+2} = 2^{n+3}.
\end{align*}
Thus, the equalities are satisfied for $n$.
In particular, a longest cycle in $F_{3,n}$ has $3 \cdot 2^{n+3}-9n-15$ vertices.
\end{proof}

With Corollary~\ref{cyc123} and Proposition~\ref{cyc4} on hand,
we shall focus on the toughness of the constructed graphs. 


\section{Toughness of the extended blocks}
\label{tb}

In this section, we study the toughness of $F_{i,0}$ (for $i = 1,2,3$)
and of its extension $F^+_{i,0}$ (for $i = 1,2$) which is
a graph obtained by adding a vertex adjacent to all vertices of the outer face
of $F_{i,0}$.
We shall use the following two propositions.

\begin{prop}\label{simpl}
Adding a simplicial vertex to a graph does not increase its toughness.
\end{prop}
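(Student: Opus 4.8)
The plan is to exhibit, for the enlarged graph, a single separating set whose vertex-to-component ratio is at most the toughness of the original graph, and then to invoke the definition of toughness as a minimum over all separating sets. Write $G'$ for the graph obtained from $G$ by adding a simplicial vertex $v$, so that the neighbourhood $N(v)$ (a set of vertices of $G$) induces a complete subgraph.

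I would first dispose of the trivial case: if $G$ is complete, then its toughness is infinite and there is nothing to prove. So I would assume that $G$ is not complete; then $G$ admits at least one separating set, and since there are only finitely many vertex subsets, the minimum defining the toughness is attained. I would fix a separating set $X \subseteq V(G)$ witnessing it, that is, with $c(G - X) \ge 2$ and $\tau(G) = |X| / c(G - X)$, and I would argue that this very same $X$, viewed as a subset of $V(G')$, serves as a separating set of $G'$ with a ratio no larger than $\tau(G)$.

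The crux is the observation that a simplicial vertex cannot split a component. Since $N(v)$ induces a complete graph, the set $N(v) \sm X$ induces a complete, hence connected, subgraph of $G - X$, so all of its vertices lie in a single component of $G - X$. Consequently, in $G' - X$ the vertex $v$ is either attached to exactly one component of $G - X$ (when $N(v) \sm X \neq \emptyset$) or left isolated (when $N(v) \subseteq X$). In the former case $c(G' - X) = c(G - X)$, and in the latter $c(G' - X) = c(G - X) + 1$; in both cases $c(G' - X) \ge c(G - X) \ge 2$, so $X$ indeed separates $G'$, and
\[
\tau(G') \;\le\; \frac{|X|}{c(G' - X)} \;\le\; \frac{|X|}{c(G - X)} \;=\; \tau(G),
\]
as required.

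I expect the only point needing care to be the claim that adding $v$ creates no new split of a component: this rests entirely on $N(v)$ being a clique, so that its trace on $G - X$ remains connected, and it is worth stating explicitly rather than leaving it implicit. The remaining bookkeeping — distinguishing the two cases according to whether $N(v) \subseteq X$, and checking that $X$ still separates $G'$ — is routine, and no induction or fresh extremal argument is needed beyond reusing a single optimal separating set of $G$.
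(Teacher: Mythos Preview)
Your proof is correct and follows essentially the same approach as the paper: show that for a separating set $X$ of $G$ one has $c(G'-X)\ge c(G-X)$, and conclude. The paper's version is a one-line remark (``We note that $c(G^+-S)\ge c(G-S)$, and the statement follows''), whereas you spell out why the simplicial property yields this inequality and handle the complete-graph case explicitly; but the underlying idea is identical.
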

\begin{proof}
We let $x$ be a simplicial vertex of a graph $G^+$ and we let $G = G^+ - x$
and we let $S$ be a set of vertices of $G$.
We note that
$c(G^+-S) \geq  c(G-S)$,
and the statement follows.
\end{proof}

\begin{prop}\label{Ttough}
Let $R$ be a $T$-region of a graph $G$
and let $I$, $O$ be the set of all inner, outer vertices of $R$, respectively.
Let $t \geq 1$
and let $S$ be a set of vertices of $G$
such that $c(G - S) > \frac{1}{t}|S|$.
If $|S \cap O| \geq 2$,
then there is a separating set $S' = (S \sm I) \cup A$
such that $c(G - S') > \frac{1}{t}|S'|$
where $A$ is chosen as follows.

\begin{itemize}
\item If $|S \cap O| = 2$, then $A$ consists of
the non-simplicial vertex of $I$ which is the common neighbour
of the vertices of $S \cap O$.
\item If $|S \cap O| = 3$,
then $A$ consists of two non-simplicial vertices of $I$.
\end{itemize}
\end{prop}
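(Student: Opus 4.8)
The plan is to analyze the structure of a $T$-region $R$ in detail. Recall that $T$ has three outer vertices $o_1,o_2,o_3$ (of degree $6$ in $T$) and six inner vertices, three of which are simplicial; call the three non-simplicial inner vertices the \emph{core} vertices. I would first record the adjacency pattern inside $T$: each pair of outer vertices has exactly one common non-simplicial inner neighbour (a core vertex), and each simplicial inner vertex is adjacent to exactly two outer vertices together with one core vertex. So the three core vertices $m_{12},m_{13},m_{23}$ (indexed by the pair of outer vertices they join) will be the candidates for the set $A$. The key observation to establish is that removing all core vertices from $T$ already disconnects the simplicial inner vertices from everything, and more precisely that, inside $G$, the only way out of $R$ for an inner vertex is through an outer vertex.

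The main step is the case analysis. Suppose $|S\cap O|=3$, so all three outer vertices are in $S$. Set $A=\{m_{12},m_{13}\}$ (two core vertices) and $S'=(S\sm I)\cup A$. Since $S$ contains $I\cap S$ worth of inner vertices but $S'$ replaces them by only two vertices, I need $|S'|\le |S|$, hence $\tfrac1t|S'|\le\tfrac1t|S|<c(G-S)$, so it suffices to show $c(G-S')\ge c(G-S)$. Here I would argue that deleting $S'$ isolates each of the three simplicial inner vertices of $R$ as singleton components (each is adjacent only to outer vertices, all in $S'$, except possibly its core neighbour — and at least two of the three core neighbours are in $A$; the third simplicial vertex $w_{23}$ has core neighbour $m_{23}\notin A$, but $w_{23}$'s other neighbours $o_2,o_3\in S$, and $m_{23}$ itself is a component or lies in a larger component of $R-S'$), and that the rest of $R$ plus $G-R$ behaves no worse than under $S$. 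A short counting of components of $G-S'$ versus $G-S$, using that the inner vertices of $R$ form an isolated chunk once the outer vertices are gone, should give $c(G-S')\ge c(G-S)$; the delicate point is checking that merging/removing the inner vertices of $R$ does not decrease the component count elsewhere, which holds because no inner vertex has a neighbour in $G-R$.

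For $|S\cap O|=2$, say $o_1,o_2\in S$ and $o_3\notin S$, I take $A=\{m_{12}\}$, the unique non-simplicial common neighbour of $o_1,o_2$, and again $S'=(S\sm I)\cup A$ with $|S'|\le|S|$. The point is that in $R-S'$ the simplicial vertex $w_{12}$ (adjacent to $o_1,o_2,m_{12}$, all in $S'$) is isolated, and the rest of $R$ hangs off the single surviving outer vertex $o_3$, so the components of $G-S'$ inside $R$ plus the one containing $o_3$ number at least as many as those of $G-S$ did — here one has to be a little careful that deleting $m_{12}$ does not accidentally merge two components that $S$ had kept apart, but since $m_{12}\in I$ has no neighbours outside $R$, any merging happens entirely within $R$, and a direct inspection of the (small, fixed) graph $T$ settles it. I expect the main obstacle to be this bookkeeping of component counts: one must make sure that in replacing the possibly many inner vertices of $S$ by the one or two vertices of $A$ we never lose a component, and the cleanest way is probably to show directly that every component of $G-S$ either survives (possibly enlarged) in $G-S'$ or is replaced by at least one new isolated simplicial vertex, giving an injection from components of $G-S$ into components of $G-S'$.
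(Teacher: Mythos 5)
There is a genuine gap in your reduction. You write that $|S'|\le |S|$ because ``$S'$ replaces the inner vertices of $S$ by only one or two vertices,'' and from this you conclude that it suffices to prove $c(G-S')\ge c(G-S)$. But the hypothesis only constrains $S\cap O$; it does not guarantee that $S$ contains \emph{any} inner vertices of $R$. If $|S\cap I|<|A|$ (for instance $S\cap I=\emptyset$), then $|S'|=|S|+|A|-|S\cap I|>|S|$, and the inequality $c(G-S')\ge c(G-S)$ is no longer enough: with $t=1$, $|S|=3$, $c(G-S)=4$ and $|S\cap O|=3$, $S\cap I=\emptyset$, you would need $c(G-S')\ge 6$ while your reduction only yields $c(G-S')\ge 4$. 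What is actually needed is the stronger quantitative claim $c(G-S')-c(G-S)\ge |S'|-|S|$, i.e.\ every vertex of $A$ added beyond the removed inner vertices must be paid for by a newly created component; this is exactly where the isolated simplicial vertices (which you do observe) enter, and it is what the paper proves. The hypothesis $t\ge 1$ is then used precisely to convert this integer inequality into $c(G-S')-c(G-S)\ge\frac1t(|S'|-|S|)$ (distinguishing the signs of $|S'|-|S|$); the fact that your argument never invokes $t\ge 1$ is a symptom of the missing case.

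Apart from this, your structural analysis of $T$ and the component bookkeeping (inner vertices have no neighbours in $G-R$, so re-adding them can only attach material to the component of the unique outer vertex outside $S'$, never merge two components both meeting $G-I$) is sound and is the right way to justify the needed inequality; note only that in the case $|S\cap O|=3$ with $A=\{m_{12},m_{13}\}$ you do not get three singleton components but two singletons plus the two-vertex component $\{m_{23},w_{23}\}$, as your own parenthetical concedes. The paper's proof is otherwise the same modification of $S$, but it aims directly at $c(G-S')-c(G-S)\ge |S'|-|S|$ and then finishes with the $t\ge 1$ observation, which avoids the case split on whether $S$ meets $I$; if you repair your argument by proving the stronger inequality (one new component per simplicial vertex isolated by the vertices of $A$, offset against the at most $|S\cap I|$ purely-inner components of $G-S$ that may disappear), it becomes a correct proof along the same lines.
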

\begin{proof}
In both cases, we modify $S$ as suggested; and we let $S'$ be the resulting set.
Clearly, $S'$ is a separating set and $c(G-S') - c(G-S) \geq 0$, and we observe that 
\begin{equation*}
c(G-S') - c(G-S) \geq |S'|- |S|.
\end{equation*}
Since $t \geq 1$, we have either $0 > \frac{1}{t}(|S'|- |S|)$ or $|S'|- |S| \geq \frac{1}{t}(|S'|- |S|)$.
Consequently, we obtain 
\begin{equation*}
c(G-S') - c(G-S) \geq \tfrac{1}{t}|S'|- \tfrac{1}{t}|S|.
\end{equation*}
We use $c(G - S) > \frac{1}{t}|S|$ and we conclude that $c(G - S') > \frac{1}{t}|S'|$.
\end{proof}

We recall that a set $D$ of vertices is \emph{dominating} a graph $G$
if every vertex of $G - D$ is adjacent to a vertex of $D$.
We note that every pair of vertices of degree $6$ is dominating $T$,
so every separating set in $T$ has at least two of these vertices.
As a consequence of Proposition~\ref{Ttough}, we note the following.

\begin{coro}\label{32tough}
The graph $T$ is $\frac{3}{2}$-tough.
\end{coro}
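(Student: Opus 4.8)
The plan is to argue by contradiction, using Proposition~\ref{Ttough} to collapse an arbitrary would-be counterexample into one of two explicit small configurations. Suppose $T$ is not $\frac{3}{2}$-tough; then there is a separating set $S$ with $c(T-S) > \frac{2}{3}|S|$. Observe that $T$ is (trivially) a $T$-region of itself, with outer vertex set $O = \{o_1,o_2,o_3\}$ and inner vertex set $I = V(T)\sm O$; and recall the remark preceding the corollary that every separating set of $T$ contains at least two of the degree-$6$ vertices, so $|S\cap O|\ge 2$. Hence Proposition~\ref{Ttough} applies with $t=\frac{3}{2}$ and produces a separating set $S' = (S\sm I)\cup A$ with $c(T-S') > \frac{2}{3}|S'|$. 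Since $V(T)=O\cup I$ we have $S\sm I = S\cap O$, so $|S'| = |S\cap O|+|A|$, which equals $3$ when $|S\cap O|=2$ (then $|A|=1$) and $5$ when $|S\cap O|=3$ (then $|A|=2$).

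It then remains to refute these two cases directly. If $|S'|=3$, the bound forces $c(T-S')\ge 3$, whereas $T-S'$ has only $6$ vertices; here $S'=\{o_i,o_j,x\}$ with $x$ the unique common non-simplicial inner neighbour of $o_i$ and $o_j$, and inspection of $T$ (Figure~\ref{f:54}) shows that the third degree-$6$ vertex loses only $o_i,o_j$ and so keeps four neighbours in $T-S'$, giving a component on five vertices and therefore at most two components in total — a contradiction. If $|S'|=5$, the bound forces $c(T-S')\ge 4$, whereas $T-S'$ has only $4$ vertices, namely the three simplicial vertices of $T$ together with one non-simplicial inner vertex; since (as noted in the proof of Proposition~\ref{T}) each simplicial vertex of $T$ has exactly one non-outer neighbour, and that neighbour is a non-simplicial inner vertex, the remaining non-simplicial inner vertex is adjacent to one of the simplicial vertices, so $T-S'$ has at most three components — again a contradiction. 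Either way we contradict $c(T-S') > \frac{2}{3}|S'|$, so $T$ is $\frac{3}{2}$-tough.

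The only genuine work is the pair of local checks at the end, and these are small: each inspects the neighbourhoods of a handful of vertices of the nine-vertex graph $T$. Moreover both checks are uniform over the choice of the pair $\{o_i,o_j\}$ (respectively the pair of non-simplicial inner vertices in $A$) by the evident $3$-fold rotational symmetry of $T$, so it suffices to verify one representative configuration in each case; I do not anticipate any real obstacle beyond reading off these adjacencies from the figure.
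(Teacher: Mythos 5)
Your proof is correct and follows the paper's intended route: the paper derives Corollary~\ref{32tough} exactly from the dominating-pair remark (forcing $|S\cap O|\ge 2$) together with Proposition~\ref{Ttough}, leaving the final check of the two reduced separating sets implicit. You simply make that small case analysis explicit, and your verification of the configurations $|S'|=3$ and $|S'|=5$ is accurate for the nine-vertex graph $T$.
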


Furthermore, if a graph contains a $T$-region 
and a vertex not belonging to the $T$-region, then the toughness of the graph is at most $\frac{5}{4}$;
and we show that this is the correct value for $F^+_{1, 0}$ and $F_{1, 0}$.

\begin{prop}\label{54tough}
The graphs $F^+_{1, 0}$ and $F_{1, 0}$ are $\frac{5}{4}$-tough.
\end{prop}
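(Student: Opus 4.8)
The plan is to show that every separating set $S$ of $F^+_{1,0}$ (and, by an analogous but simpler argument, of $F_{1,0}$) satisfies $|S| \geq \tfrac54\, c(F^+_{1,0} - S)$. Arguing by contradiction, suppose $c(F^+_{1,0} - S) > \tfrac45 |S|$. The graph $F^+_{1,0}$ contains ten $T$-regions, all sharing one common outer vertex, plus the apex vertex added to the outer face. The key reduction tool is Proposition~\ref{Ttough} with $t = \tfrac54$: whenever $S$ meets the outer vertices of one of these $T$-regions in at least two vertices, we may replace $S \cap I$ (the inner vertices of that region in $S$) by either one or two specified non-simplicial inner vertices, obtaining a set $S'$ with $|S'| \geq |S|$ possible only in a controlled way and still $c(F^+_{1,0} - S') > \tfrac45 |S'|$. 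Applying this region by region, I would reduce to a set $S^\ast$ that contains no simplicial (white) vertex and whose intersection with each $T$-region's inner vertices is one of the few canonical small configurations described in Proposition~\ref{Ttough}.

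Next I would analyze the reduced set $S^\ast$ directly. After the reduction, $F^+_{1,0} - S^\ast$ is a graph on an explicit, small set of vertices: the ``skeleton'' consisting of the common shared outer vertex, the other outer vertices of the ten $T$-regions, the few non-simplicial inner vertices not removed, the apex, and all thirty white simplicial vertices (which are never in $S^\ast$). I would enumerate, up to the obvious symmetry of $F_{1,0}$, the possibilities for which outer/inner vertices lie in $S^\ast$ and count components of the remainder. The crucial observation is that each white vertex of a $T$-region has only one neighbour outside the outer vertices of that region (by the simpliciality structure noted before Proposition~\ref{T} and used in Proposition~\ref{T}), so a white vertex becomes isolated in $F^+_{1,0} - S^\ast$ only when enough of its $T$-region's outer vertices (together with its unique non-outer neighbour) are removed. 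This lets me bound $c(F^+_{1,0} - S^\ast)$ above by an affine function of $|S^\ast|$ with slope $\tfrac45$, contradicting $c > \tfrac45 |S^\ast|$; the extremal configurations attaining equality are exactly the $\tfrac54$-tight ones, which also proves tightness (consistent with the remark preceding the statement that a $T$-region plus an extra vertex forces toughness at most $\tfrac54$).

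For the bound on $F_{1,0}$ itself, I would note that $F^+_{1,0}$ is obtained from $F_{1,0}$ by adding the apex vertex adjacent to all outer-face vertices; but the apex is not simplicial, so Proposition~\ref{simpl} does not apply directly in the convenient direction. Instead I would observe that any separating set $S$ of $F_{1,0}$ is also separating in $F^+_{1,0}$ after possibly adding the apex: either $S$ already separates $F^+_{1,0}$ with the same number of components, or adding the apex to $S$ gives a set of size $|S| + 1$ with $c(F^+_{1,0} - (S \cup \{\text{apex}\})) = c(F_{1,0} - S)$, and since $F_{1,0} - S$ has at least two components we check $\tfrac{|S|+1}{c} \geq \tfrac{|S|}{c}$ trivially while the harder inequality $|S| \geq \tfrac54 c$ for $F_{1,0}$ follows from $|S| + 1 \geq \tfrac54 c$ together with a short case check when $|S|+1$ is exactly $\tfrac54 c$. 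Alternatively, and perhaps more cleanly, I would just run the same region-by-region reduction and enumeration directly inside $F_{1,0}$, which differs from $F^+_{1,0}$ only by the absence of the apex and hence has one fewer candidate vertex in the skeleton.

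The main obstacle I anticipate is the final enumeration step: after the Proposition~\ref{Ttough} reduction one still has a modest but genuine case analysis over how $S^\ast$ distributes among the ten $T$-regions and the shared vertex, and one must verify the slope-$\tfrac45$ bound in each case, paying particular attention to configurations where removing the shared outer vertex simultaneously affects all ten regions (this is where the toughness could a priori drop below $\tfrac54$, so it needs the most care). Exploiting the symmetry group of $F_{1,0}$ acting on the ten regions cuts the number of genuinely distinct cases down to a manageable list, and the white vertices — being independent and simplicial — contribute components only in the predictable way described above, so the counting, while tedious, is routine once the reduction is in place.
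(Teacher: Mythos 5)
Your opening moves match the paper's: argue by contradiction from $c(F^+_{1,0}-S)>\tfrac45|S|$ and normalize $S$ region by region with Proposition~\ref{Ttough}. But the heart of the proof is missing. After the normalization, you defer everything to an ``enumeration up to symmetry'' that you declare routine; it is not. Proposition~\ref{Ttough} only touches regions with at least two outer vertices in $S$, so your claim that the reduced set contains no white vertices is unjustified, and even granting a clean reduction, the reduced set may still contain an arbitrary subset of the twenty non-shared outer (black) vertices, the ring of blue vertices, the hub, the second hub, the apex, and grey vertices of regions meeting $S$ in at most one outer vertex --- far too many configurations to check case by case, even modulo the symmetry group of order roughly $10!\cdot$(small). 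The paper replaces this enumeration by a discharging argument whose key structural inputs you would have to discover and prove: that a component made only of inner vertices forces two outer vertices of its region into $S$, the bound $r_2+2r_3\geq|\BB|$ relating such regions to the components meeting black or blue vertices, the fact that the graph induced by black and blue vertices has independence number $13$, and a final case analysis on whether the shared vertex, the vertex $c'$, and the apex lie in $S$ (including a parity argument at $|\BB|=12$ and a structural argument at $|\BB|=13$). Asserting that this counting is ``tedious but routine'' is precisely the gap: no slope-$\tfrac45$ bound is actually established.

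Your treatment of $F_{1,0}$ also rests on a false premise. Since $F_{1,0}$ is maximal planar, its outer face is a triangle, so the apex of $F^+_{1,0}$ has exactly three mutually adjacent neighbours and \emph{is} simplicial; Proposition~\ref{simpl} then gives that the toughness of $F^+_{1,0}$ is at most that of $F_{1,0}$, which is exactly the convenient direction: proving $F^+_{1,0}$ is $\tfrac54$-tough immediately yields the statement for $F_{1,0}$, as the paper does in one line. Your substitute derivation is moreover logically incomplete: from $|S|+1\geq\tfrac54\,c$ you only get $|S|\geq\tfrac54\,c-1$, and the problematic range is the whole interval $\tfrac54 c-1\leq|S|<\tfrac54 c$, not just the single equality case you propose to check. (Your fallback of redoing the whole argument inside $F_{1,0}$ would work in principle, but it is unnecessary and again presupposes the unproved core bound.)
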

\begin{proof}
By Proposition~\ref{simpl}, it suffices to show that $F^+_{1, 0}$ is $\frac{5}{4}$-tough.
To the contrary, we suppose that
there is a separating set $S$ of vertices
such that $c(F^+_{1, 0} - S) > \frac{4}{5}|S|$. 
We consider such $S$ adjusted by using Proposition~\ref{Ttough},
in sequence, for all $T$-regions of $F^+_{1, 0}$.

We let $\II$ denote the set of all components of $F^+_{1, 0} - S$
consisting exclusively of inner vertices of some $T$-region.
We note that the existence of such component implies that
at least two outer vertices of the corresponding $T$-region belong to $S$
(since every pair of outer vertices of $T$ is dominating $T$).
We let $r_2$, $r_3$ denote the number of $T$-regions whose exactly $2$, $3$
outer vertices belong to $S$, respectively.
We let $c$ denote the common vertex of all $T$-regions.
Considering the inner vertices of the $T$-regions,
we call simplicial such vertices \emph{white}
and the remaining such vertices \emph{grey}.
Except for $c$, the outer vertices of the $T$-regions are called \emph{black}.
The vertices adjacent to a black vertex
but not belonging to a $T$-region are called \emph{blue}.
We let $c'$ denote the vertex adjacent to all blue vertices
and $x$ denote the vertex of $F^+_{1, 0}$ not belonging to $F_{1, 0}$.
We let $\BB$ denote the set of all components of $F^+_{1, 0} - S$
containing a black vertex or a blue vertex.

We shall use a discharging argument to avoid complicated inequalities.
We assign charge $5$ to every component of $F^+_{1, 0} - S$,
and we aim to distribute all assigned charge among the vertices of $S$,
and to show that every vertex of $S$ receives charge at most $4$,
contradicting the assumption that $c(F^+_{1, 0} - S) > \frac{4}{5}|S|$.
We pre-distribute the charge according to the following rules.
\begin{itemize}
\item Every grey vertex of $S$ receives $4$
of the total charge of the components of $\II$ belonging to the same $T$-region as the grey vertex.
\item For every $T$-region, the remaining charge
of all components of $\II$ belonging to this $T$-region is distributed
equally among the black vertices of $S$ belonging to this $T$-region.
\item Every blue vertex of $S$ receives
as much of the total charge of components outside $\II$ as possible (at most $4$).
\end{itemize}
We note that after the pre-distribution, the charge of every component of $\II$ is $0$;
and we focus on the remaining charge of the rest of the components.

If $c(F^+_{1, 0} - S) - |\II| \leq 1$, then we have $|\II| \geq 1$ and the remaining charge is at most $5$.
Consequently, $r_2 \geq 1$ or $r_3 \geq 1$,
and in both cases, the vertices of $S$ can still receive charge at least $5$, a contradiction.

We assume that $c(F^+_{1, 0} - S) - |\II| \geq 2$.
We show that $F^+_{1, 0} - S$ has no component containing $c$ and no black vertex.
The existence of such component implies that
all black vertices belong to $S$, and these vertices can still receive $\frac{7}{2} \cdot 20$.
A contradiction follows by counting the maximum possible number of components not belonging to $\II$.

Consequently, $F^+_{1, 0} - S$ has at most one component belonging to neither $\II$ nor $\BB$.
If $|\BB| \leq 1$, then there is such component (since $c(F^+_{1, 0} - S) - |\II| \geq 2$).
Clearly, this component contains $c'$ or $x$, that is, 
all blue vertices or $c'$ and at least two blue vertices belong to $S$,
and a contradiction follows.

We assume that $|\BB| \geq 2$.
On the other hand, considering the graph induced by black and blue vertices,
we observe that the size of a maximum independent set
of this graph is $13$. Thus, $|\BB| \leq 13$.

Since $|\BB| \geq 2$, we note that at least $|\BB|$ black and at least $|\BB|$ blue vertices belong to $S$.
We let $d$ denote the number of blue vertices of $S$ minus $|\BB|$.
We recall that $F^+_{1, 0} - S$ has at most $|\BB| + 1$ components not belonging to $\II$.
Thus, the remaining charge, which is yet to be distributed, is at most $|\BB| + 5 - 4d$;
and since $d \geq 0$, it is at most $|\BB| + 5$.

If $c$ does not belong to $S$, then 
the black vertices of $S$ can still receive at least $\frac{7}{2}|\BB|$.
Clearly, $\frac{7}{2}|\BB| \geq |\BB| + 5$ since $|\BB| \geq 2$, a contradiction.

We assume that $c$ belongs to $S$. 
Clearly, $c$ can receive $4$.
We note that the black vertices of $S$ can still receive $3r_2 + r_3$;
and since $r_2 + 2r_3 \geq |\BB|$, they can receive at least $\frac{1}{2}|\BB| + \frac{5}{2}r_2$
(thus, at least $\frac{1}{2}|\BB|$).

If $c'$ does not belong to $S$,
then we consider the graph induced by $c'$ and by the black and blue vertices,
and we observe that $r_2 + d \geq |\BB| - 1$
(since there are at least $|\BB| - 1$ components of $\BB$ not containing $c'$).
Consequently, we have $\frac{1}{2}|\BB| + \frac{5}{2}r_2 + 4 > |\BB| + 5 - 4d$ since $|\BB| \geq 2$, a contradiction.

We assume that $c'$ belongs to $S$,
so $c'$ can receive $4$.
If $d \geq 1$, then
the remaining charge is at most $|\BB| + 1$;
and we have $\frac{1}{2}|\BB| + 4 + 4 > |\BB| + 1$ since $|\BB| \leq 13$, a contradiction.

We assume that $d = 0$.
If there is not a component containing $x$ as its only vertex,
then the remaining charge is $|\BB|$.
Similarly as above, we have $\frac{1}{2}|\BB| + 4 + 4 > |\BB|$, a contradiction.

We assume that $F^+_{1, 0} - S$ has a component consisting of $x$,
so all neighbours of $x$ belong to $S$.
Since $d = 0$ and since $|\BB| \geq 2$, we have $r_2 \geq 1$.
If $|\BB| \leq 11$, then we have $\frac{1}{2}|\BB| + \frac{5}{2}r_2 + 4 + 4 \geq |\BB| + 5$, a contradiction.
If $|\BB| = 12$, then the parity implies that $r_2 \geq 2$ or $r_3 \geq 6$, and a contradiction follows.

We assume that $|\BB| = 13$.
We consider the structure of $\BB$ and we observe that $r_3 \leq 4$, that is, $r_2 \geq 5$.
Consequently, we get $\frac{1}{2}|\BB| + \frac{5}{2}r_2 + 4 + 4 > |\BB| + 5$,
and we obtain the desired distribution of the assigned charge, a contradiction.
Thus, $F^+_{1, 0}$ is $\frac{5}{4}$-tough. 
\end{proof}

We continue with the following.

\begin{prop}\label{87tough}
The graphs $F^+_{2, 0}$ and $F_{2, 0}$ are $\frac{8}{7}$-tough.
\end{prop}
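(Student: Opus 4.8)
The plan is to follow the proof of Proposition~\ref{54tough}, which becomes considerably shorter here because $F_{2,0}$ contains only one $T$-region. Since the outer face of the maximal planar graph $F_{2,0}$ is bounded by a triangle, the vertex added to obtain $F^+_{2,0}$ is simplicial in $F^+_{2,0}$, so by Proposition~\ref{simpl} it suffices to prove that $F^+_{2,0}$ is $\tfrac{8}{7}$-tough. Suppose to the contrary that $F^+_{2,0}$ has a separating set $S$ with $c(F^+_{2,0}-S) > \tfrac{7}{8}|S|$, i.e. $c(F^+_{2,0}-S) > \tfrac{1}{8/7}|S|$. As $\tfrac{8}{7}\ge 1$, I may first replace $S$, by applying Proposition~\ref{Ttough} to the unique $T$-region $R$ of $F^+_{2,0}$ whenever at least two of its outer vertices lie in $S$, by a separating set that still violates the bound, contains no simplicial vertex of $R$, and contains the prescribed non-simplicial inner vertices of $R$. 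After this adjustment every component of $F^+_{2,0}-S$ lying inside $R$ is a single (white) simplicial vertex, and the presence of such a component forces at least two outer vertices of $R$ into $S$, since any two outer vertices of $T$ dominate $T$ (as noted before Corollary~\ref{32tough}).

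I then run the discharging argument of Proposition~\ref{54tough} with the constants rescaled: assign charge $8$ to each component of $F^+_{2,0}-S$ and redistribute it among the vertices of $S$ so that each of them receives at most $7$; this contradicts $8\,c(F^+_{2,0}-S) > 7|S|$. The redistribution rules mirror those of Proposition~\ref{54tough}: a non-simplicial inner vertex of $R$ belonging to $S$ absorbs up to $4$ of the charge of the white-singleton components of $R$; any leftover charge of those components is pushed onto the (at least two) outer vertices of $R$ in $S$; and the charge of every remaining component --- each of which either meets one of the six vertices of $F_{2,0}$ outside $R$ or is the singleton consisting of the added vertex --- is assigned to the vertices of $S$ outside $R$, using that this part of the graph is small and has bounded independence and domination numbers (the analogue of the bound $|\BB|\le 13$ in the $\tfrac54$ proof). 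Because $F_{2,0}$ has only $15$ vertices and a single $T$-region, only a few cases occur --- including the degenerate one $c(F^+_{2,0}-S)-|\II|\le 1$, treated exactly as in Proposition~\ref{54tough} --- and a short count settles each of them.

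The main obstacle is the same as in the $\tfrac54$ case: bounding $c(F^+_{2,0}-S)$ once outer vertices of $R$ have been moved into $S$, which lets the simplicial vertices of $R$ split off as their own components. One must check that the part of $F_{2,0}$ outside $R$ cannot supply enough additional components to offset the cost of those outer vertices together with the accompanying non-simplicial inner vertices. This is a finite verification on the explicit graph of Figure~\ref{f:87}; the genuinely delicate points are choosing the discharging constants so that the threshold comes out to be exactly $7$, and handling the boundary cases in which $c(F^+_{2,0}-S)$ is small.
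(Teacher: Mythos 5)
Your outline coincides with the paper's own: reduce to $F^+_{2,0}$ via Proposition~\ref{simpl} (the added vertex is indeed simplicial), normalize the hypothetical bad set $S$ with Proposition~\ref{Ttough}, and finish with a finite analysis of a small explicit graph. What you skip, however, is the step that makes the paper's finite check actually short. Since $S$ is separating and $F^+_{2,0}$ is $3$-connected, $|S| \geq 3$, so $c(F^+_{2,0}-S) > \tfrac{7}{8}|S| \geq \tfrac{21}{8}$ gives $c(F^+_{2,0}-S) \geq 3$; and because every pair of black vertices dominates \emph{all} of $F_{2,0}$ (not merely the $T$-region), having at most one black vertex in $S$ would leave at most two components (the connected remainder of $F_{2,0}-S$ plus possibly the added vertex). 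Hence at least two black vertices lie in $S$ \emph{unconditionally}, Proposition~\ref{Ttough} is always applicable, and the intersection of the adjusted set $S'$ with the $T$-region is pinned down to essentially two configurations, after which ``considering the possibilities'' is a genuinely short enumeration. In your version the two-black-vertices conclusion is drawn only when some component lies entirely inside the $T$-region $R$, so the case analysis you would have to run is larger and less structured than you suggest.

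Two further concrete problems. First, your claim that after the adjustment every component of $F^+_{2,0}-S$ inside $R$ is a white singleton fails when all three outer vertices of $R$ lie in $S$: Proposition~\ref{Ttough} then places only two non-simplicial inner vertices into $S$, and the third such (grey) vertex together with its white neighbour survives as a two-vertex component, which your discharging rules (phrased for white singletons) do not cover. Second, and mainly, the decisive inequality --- that the adjusted set satisfies $c(F^+_{2,0}-S') \leq \tfrac{7}{8}|S'|$ --- is never actually established: you defer it to ``a short count'' and explicitly leave open the choice of discharging constants that would make the threshold $7$ work. That count is the entire content of the proposition, so as written this is a plan rather than a proof; moreover, importing the discharging machinery of Proposition~\ref{54tough} is unnecessary here, since after the normalization above a direct check of the few possible sets $S'$ in the $16$-vertex graph suffices, which is how the paper concludes.
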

\begin{proof}
By Proposition~\ref{simpl}, it suffices to show
the toughness of $F^+_{2, 0}$ which we do via contradiction. 
We suppose that
there is a separating set $S$ of vertices in $F^+_{2, 0}$
such that $c(F^+_{2, 0} - S) > \frac{7}{8}|S|$. 

Since $S$ is separating, we have $|S| \geq 3$.
Consequently, we can assume that $c(F^+_{2, 0} - S) \geq 3$.  

To specify the structure of $S$, we consider the graph $F_{2, 0}$.
We note that $F_{2, 0}$ 
contains two $T$-regions and the $T$-regions share their outer vertices;
and we call these outer vertices \emph{black}
(as well as the corresponding vertices of $F^+_{2, 0}$).
We observe that every pair of black vertices is dominating $F_{2, 0}$.

Since $c(F^+_{2, 0} - S) \geq 3$,
we have that at least two black vertices belong to $S$.
We note that $F^+_{2, 0}$ contains one $T$-region
(and its outer vertices are black),
and we modify $S$ using Proposition~\ref{Ttough};
and we let $S'$ be the resulting set.
Considering the possibilities,
we observe that $c(F^+_{2, 0} - S') \leq \frac{7}{8}|S'|$, a contradiction.
\end{proof}

We shall use the toughness of the building blocks $F_{i, 0}$
(given by Propositions~\ref{54tough} and~\ref{87tough} and by Corollary~\ref{32tough})
to show the toughness of the constructed graphs $F_{i, n}$.


\section{Gluing tough graphs}
\label{sg}

We shall use the following lemma as the main tool for showing the toughness of graphs
which are obtained by the standard construction for bounding the shortness exponent. 

\begin{lemma}\label{constr2}
For $i = 1,2$, let $G^+_i$ and $G_i$ be $t$-tough graphs
such that $G_i$ is obtained by removing vertex $v_i$ from $G^+_i$.
Let $U$ be a graph obtained from the disjoint union of $G_1$ and $G_2$ by adding new edges 
such that the minimum degree of the bipartite graph $(N(v_1), N(v_2))$ is at least $t$.
Then $U$ is $t$-tough.
\end{lemma}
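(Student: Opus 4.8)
The plan is to argue by contradiction. Suppose $U$ is not $t$-tough; then there is a set $S \subseteq V(U)$ with $c(U - S) \geq 2$ and $c(U - S) > \tfrac{1}{t}|S|$. I write $S_i = S \cap V(G_i)$, and I use throughout that the only edges of $U$ joining $V(G_1)$ to $V(G_2)$ are the new edges between $N(v_1)$ and $N(v_2)$.

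The first step is a purely combinatorial inequality: $c(U - S) \leq c(G_1 - S_1) + c(G_2 - S_2)$. Indeed, every component of $U - S$ lies inside $V(G_1)$, or inside $V(G_2)$, or meets both, and a component of the last kind contains at least one whole component of $G_1 - S_1$ and at least one whole component of $G_2 - S_2$; counting the components of $G_1 - S_1$ and of $G_2 - S_2$ that get absorbed then yields the bound. If both $G_1 - S_1$ and $G_2 - S_2$ are disconnected, then each $S_i$ separates $G_i$, so the $t$-toughness of $G_1$ and of $G_2$ gives $c(G_i - S_i) \leq \tfrac{1}{t}|S_i|$, and summing contradicts the choice of $S$. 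Hence, up to symmetry, I may assume $G_2 - S_2$ is connected; if $G_2 - S_2$ is empty then $c(U - S) \leq c(G_1 - S_1)$ and we finish as before, so let $L$ be the unique component of $G_2 - S_2$.

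The core of the argument is then a dichotomy according to whether every component of $G_1 - S_1$ that meets $N(v_1) \setminus S_1$ is joined to $L$ in $U - S$. If it is, then this has exactly the same effect on the number of components as re-inserting $v_1$, i.e.\ $c(U - S) = c(G_1^+ - S_1)$; the $t$-toughness of $G_1^+$ then closes this case when $c(G_1^+ - S_1) \geq 2$, while $c(G_1^+ - S_1) \leq 1$ contradicts $c(U - S) \geq 2$. Otherwise, some component $K$ of $G_1 - S_1$ meets $N(v_1) \setminus S_1$ but is not joined to $L$; picking $u \in K \cap (N(v_1) \setminus S_1)$, none of the (at least $t$) partners of $u$ in the bipartite graph $(N(v_1), N(v_2))$ can survive in $U - S$, for otherwise $K$ would reach $L$, so all of them lie in $S_2$ and hence $|S_2| \geq t$. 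If $G_1 - S_1$ is disconnected, this combines with $c(U - S) \leq c(G_1 - S_1) + 1 \leq \tfrac{1}{t}|S_1| + 1 \leq \tfrac{1}{t}|S|$ to give the contradiction. If $G_1 - S_1$ is also connected, I would run the same dichotomy with the two sides interchanged, obtaining either $c(U - S) = c(G_2^+ - S_2)$ (handled as before via the $t$-toughness of $G_2^+$) or $|S_1| \geq t$; in the latter subcase $|S| \geq 2t$ while $c(U - S) \leq c(G_1 - S_1) + c(G_2 - S_2) = 2$, again a contradiction.

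The step I expect to be delicate — and, I suspect, the place where \cite[Lemma~1]{5/4} was flawed — is precisely this last dichotomy. One cannot bound $c(G_i - S_i)$ by $\tfrac{1}{t}|S_i|$ through the toughness of $G_i$ when $S_i$ fails to separate $G_i$; the remedy is to pass to the augmented graph $G_i^+$ in exactly that situation, and to extract the inequality $|S_{3-i}| \geq t$ from the minimum-degree hypothesis on $(N(v_1), N(v_2))$. The remaining work is the bookkeeping of which of the four toughness assumptions on $G_1$, $G_2$, $G_1^+$, $G_2^+$ is invoked in each subcase.
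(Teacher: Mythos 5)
Your argument is correct and follows essentially the same route as the paper's proof: split the cut into $S_1$ and $S_2$, use $c(U-S)\le c(G_1-S_1)+c(G_2-S_2)$ together with the toughness of $G_i$ when $S_i$ separates $G_i$, pass to the augmented graph $G_i^+$ when the cut acts like a cut there, and otherwise use the minimum-degree condition on $(N(v_1),N(v_2))$ to force $|S_{3-i}|\ge t$ (which is exactly the repair of the flawed argument in \cite[Lemma~1]{5/4}). The differences are only organizational --- you argue by contradiction with a dichotomy on which components merge with the unique surviving component, while the paper analyzes whether $c(U-X)=c(G_2-X_2)+1$ and whether $N(v_2)\subseteq X_2$ --- not substantive.
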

\begin{proof}
We assume that $t > 0$ and that there exists a separating set of vertices in $U$.
We let $X$ be such a set and we let $X_i = X \cap V(G_i)$ for $i = 1,2$.
Clearly, $2 \leq c(U-X) \leq c(G_1-X_1) + c(G_2-X_2)$,
and we use it to show that $c(U-X) \leq \frac{1}{t}|X|$.

If $X_i$ is a separating set in $G_i$, then the toughness of $G_i$ implies that
$c(G_i-X_i) \leq \frac{1}{t}|X_i|$.

We suppose that $X_1$ is not separating in $G_1$,
and we observe that $c(U- X) \leq c(G_2-X_2) +1$.
If $c(U- X) \leq c(G_2-X_2)$, then $X_2$ is separating in $G_2$,
and thus, $c(G_2-X_2) \leq \frac{1}{t}|X_2|$
and the desired inequality follows since $|X_2| \leq |X|$.

We assume that $c(U- X) = c(G_2-X_2) +1$.
Clearly, if $N(v_2) \subseteq X_2$, then $c(G_2-X_2) +1 = c(G^+_2-X_2)$;
so $X_2$ is separating in $G^+_2$ and
we have $c(G^+_2-X_2) \leq \frac{1}{t}|X_2|$ and the inequality follows.

In addition, we assume that there is a vertex of $N(v_2)$ not belonging to $X_2$.
We recall that this vertex has at least $\lceil t \rceil$ neighbours in $N(v_1)$.
Since $c(U- X) = c(G_2-X_2) +1$, we note that all these neighbours belong to $X_1$.
Thus, $|X_1| \geq t$ and we have $c(G_1-X_1) \leq \frac{1}{t}|X_1|$.

Similarly, we get that if $X_2$ is not separating in $G_2$,
then $c(G_2-X_2) \leq \frac{1}{t}|X_2|$.
We conclude that (no matter whether $X_i$ is separating or not)
we have $c(G_i-X_i) \leq \frac{1}{t}|X_i|$ for both $i = 1,2$;
and the inequality follows.
\end{proof}

\begin{figure}[ht]
    \centering
    \includegraphics[scale=0.6]{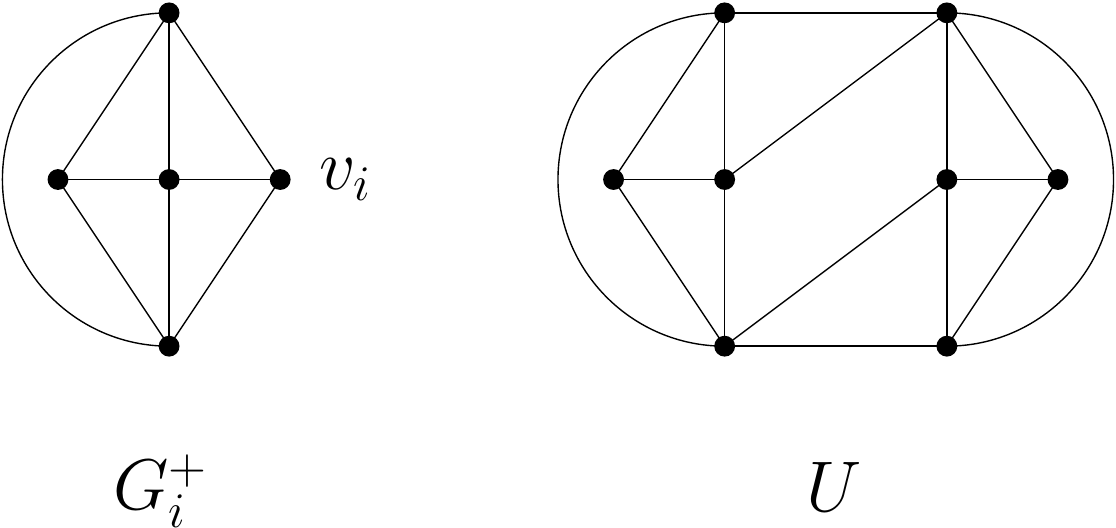}
    \caption{A counterexample to a statement presented in~\cite[Lemma~1]{5/4}.
    We consider the graphs $G^+_i$ and $G_i = G^+_i - v_i$ for $i = 1,2$, and the graph $U$.
    We note that $U$ is obtained from the disjoint union of $G_1$ and $G_2$
    by adding edges and every vertex of $N(v_1) \cup N(v_2)$ is incident with at least one new edge.
    Clearly, $G^+_i$ and $G_i$ are $\frac{3}{2}$-tough,
    but $U$ is not.} 
    \label{f:counter}
\end{figure}

We remark that a similar statement appears in~\cite[Lemma~1]{5/4};
and it is used in~\cite{chpl}.
(Compared to Lemma~\ref{constr2},
the main difference is that the minimum degree
of the considered bipartite graph is required to be at least $1$.)
The graphs depicted in Figure~\ref{f:counter} show that this statement is false.
We view Lemma~\ref{constr2} as a fixed version of this statement,
and we remark that Lemma~\ref{constr2} can be applied in the arguments of~\cite{5/4} and~\cite{chpl}. 
We note that Lemma~\ref{constr2} can be viewed as a generalization of a similar statement
(for $1$-tough graphs) presented in~\cite[Lemma~4]{dill}.


\section{Toughness of the constructed graphs}
\label{tt}

In this section, we clarify that the graphs $F_{i, n}$ have the properties
stated in Proposition~\ref{shortAll}.
\begin{proof}[Proof of Proposition~\ref{shortAll}]
We recall that the order of the constructed graphs and the length of their longest cycles are given
by Corollary~\ref{cyc123} and by Proposition~\ref{cyc4}.

For every $i = 1,2$, we show the toughness of the graphs $F_{i, n}$ using induction on $n$.
The case $n = 0$ is verified by Propositions~\ref{54tough} and~\ref{87tough}.
By induction hypothesis, $F_{i, n-1}$ has the required toughness,
and by Proposition~\ref{simpl}, so does a graph obtained from $F_{i, n-1}$ by removing a simplicial vertex;
we shall apply Lemma~\ref{constr2}, and we view these two graphs as playing the role of $G^+_1$ and $G_1$ and 
we view graphs $F^+_{i, 0}$ and $F_{i, 0}$ as playing the role of $G^+_2$ and $G_2$.
We consider the graph obtained from $F_{i, n-1}$ by replacing one 
of its simplicial vertices by a copy of $F_{i, 0}$ and by adding edges as in the present construction
(we recall the construction of graphs $F_{i, n}$ for $i = 1,2$; see Section~\ref{sc}).
By Lemma~\ref{constr2}, the resulting graph has the required toughness.
Thus, we can replace a simplicial vertex
of the resulting graph and apply Lemma~\ref{constr2} again;
and repeating this argument, we obtain that $F_{i, n}$ has the required toughness.

Similarly, we show the toughness of $F_{3, n}$ by induction on $n$.
By Corollary~\ref{32tough}, the toughness of $F_{3, 0}$ is greater than $1$.
We consider Lemma~\ref{constr3} (see below)
applied on the graph $F_{3, n-1}$ playing the role of $G$
(and then applied repeatedly on the resulting graph),
and we obtain that the toughness of $F_{3, n}$ is greater than $1$.
\end{proof}

Similarly to Lemma~\ref{constr2}, the following lemma can be used to
construct large graphs from smaller ones while preserving certain toughness.

\begin{lemma}\label{constr3}
Let $G$ be a graph of toughness greater than $1$ which contains a $K_4$-region
and let $G'$ be a graph obtained from $G$
by replacing this $K_4$-region by a $T$-region (in the natural way).
Then the toughness of $G'$ is greater than $1$.
\end{lemma}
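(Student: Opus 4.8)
The plan is to argue by contradiction, following the same template as the proof of Lemma~\ref{constr2}, but now exploiting the specific structure of the $T$-region that replaces the $K_4$-region. Suppose $G'$ has toughness at most $1$; since $G$ is not complete (it has a $K_4$-region, hence a simplicial vertex of degree $3$, and adding a $T$-region keeps it non-complete), there is a separating set $X$ of $G'$ with $c(G'-X) \geq |X|$. Let $R$ denote the $T$-region of $G'$ and let $O = \{o_1,o_2,o_3\}$ be its outer vertices (these are exactly the former outer vertices of the $K_4$-region, so they are also vertices of $G$), and let $I$ be its inner vertices. The first step is to reduce to a separating set that interacts cleanly with $R$: if $|X \cap O| \geq 2$, apply Proposition~\ref{Ttough} (with $t = 1$) to replace $X$ by a set $X'$ with $c(G'-X') \geq |X'|$ in which $X' \cap I$ consists only of non-simplicial inner vertices of $R$ (one or two of them, depending on whether two or three outer vertices lie in the set). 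So after this reduction we may assume $X \cap I$ contains no simplicial vertex of $R$.

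Next I would translate $X$ into a separating set of $G$. Set $X_G = (X \setminus I) \cup (X \cap \{\text{the non-simplicial inner vertices of }R\})$ — but since $G$ has no inner vertices of $R$, really I want to map $X$ to a subset of $V(G)$ by deleting $X \cap I$ and, if necessary, compensating with the single white vertex $w$ of the $K_4$-region or with outer vertices. The cleanest version: distinguish cases on $|X \cap O|$. If $|X \cap O| \leq 1$, then $R - X$ is connected (a pair of outer vertices dominates $T$, and actually even one inner simplicial vertex is enough to see that with at most one outer vertex removed the region stays in one piece together with its surviving outer vertices), so deleting the few inner vertices of $X$ from the picture and keeping $X \cap O$ gives a separating set $X_G$ of $G$ with $|X_G| \leq |X|$ and $c(G - X_G) \geq c(G'-X)$, because collapsing $R$ back to a $K_4$-region (i.e.\ to the single white vertex $w$) merges at most the components inside $R$, of which there is at most one. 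If $|X \cap O| = 2$, then by the reduction $X \cap I$ is the one common non-simplicial neighbour, so $|X \cap (O \cup I)| = 3$; replacing those three vertices by the two outer vertices together with $w$ in $G$ gives $|X_G| = |X| - 1$ while $c(G - X_G) \geq c(G'-X) - 1$ (the region contributes at most two components in $G'$, namely one per surviving outer-vertex side, but with the $K_4$-region these merge through $w$'s neighbours — a short check of the picture of $T$ is needed here). If $|X \cap O| = 3$, then $X \cap I$ is two non-simplicial inner vertices; mapping to the three outer vertices plus $w$ in $G$ gives $|X_G| = |X| - 1$ and one checks $c(G-X_G) \geq c(G'-X) - 1$ since in $G'$ the region $R$ minus these five vertices has at most $\ldots$ components, all of which fold into the components attached at $o_1,o_2,o_3,w$ in $G$.

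In each case I reach $c(G - X_G) \geq |X_G|$ for a separating set $X_G$ of $G$ (using $|X| \geq$ a small constant, which holds since $X$ separates and $G'$ is $3$-connected, so that the $-1$'s in both $|X|$ and $c$ cancel and the inequality survives), contradicting that the toughness of $G$ is strictly greater than $1$. The one place where the strictness of the toughness of $G$ is essential is exactly where the counting in $G'$ loses a component relative to $G$ but the separating set also shrinks by one; there the weak inequality $c(G-X_G) \geq |X_G|$ would be consistent with $1$-toughness of $G$, and only ``$> 1$'' yields the contradiction. The main obstacle is the careful bookkeeping of components inside the $T$-region: I need the precise fact (readable off Figure~\ref{f:54}, and in the spirit of Proposition~\ref{T}) that after deleting any set of inner vertices containing no simplicial vertex, together with $j \in \{0,1,2,3\}$ outer vertices, the number of components of $R$ minus that set that are not attached to a surviving outer vertex is at most $\max(0, j-1)$, so that replacing $R$ by the $K_4$-region loses at most $\max(0,j-1) \leq 2$ components while the substituted set in $G$ (the surviving outer vertices plus possibly $w$) is smaller by exactly the right amount. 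Verifying this small combinatorial claim about $T$ — essentially that its inner non-simplicial vertices form a path/near-path each of whose deletions splits off at most one new ``free'' piece — is the crux; everything else is the routine case analysis mirroring Lemma~\ref{constr2}.
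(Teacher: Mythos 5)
Your overall strategy (fold a bad separating set of $G'$ back into $G$ and contradict the toughness of $G$) can be made to work, but as written it breaks exactly at the step you yourself flag as the crux, and the crux claim is in fact false. Using the properties of $T$ stated in the paper (each simplicial inner vertex has exactly one inner neighbour, a non-simplicial ``grey'' one; each pair of outer vertices has a unique grey common neighbour), delete all three outer vertices and two of the three grey vertices --- this is precisely your normalized situation with $j=3$. The region then splits into \emph{three} components meeting no surviving outer vertex (the surviving grey with its white neighbour, plus two isolated whites), exceeding your bound $\max(0,j-1)=2$. Correspondingly, your per-case bookkeeping does not close: if $w$ (the white vertex of the $K_4$-region) is put into $X_G$, as you state, then in the case $|X\cap O|=2$ you replace three vertices by three, so $|X_G|=|X|$ rather than $|X|-1$, and in the case $|X\cap O|=3$ the folding loses three components, not one; in both cases you only reach $c(G-X_G)\ge |X_G|-1$, respectively $c(G-X_G)\ge |X_G|-2$, which is perfectly consistent with $G$ being more than $1$-tough, so no contradiction follows (and the decisive component count in the $j=3$ case is literally left as ``$\ldots$'' in your text). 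The repair is to keep $w$ \emph{outside} $X_G$: then $w$ either joins the component of a surviving outer vertex or becomes the single component replacing all the ``free'' inner components, and the number of components lost by folding is at most the number of deleted inner vertices, which is exactly how much the set shrinks.

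That last observation is essentially the paper's entire proof, and it shows that the contradiction setup, the normalization via Proposition~\ref{Ttough} and the case analysis on $|X\cap O|$ are all unnecessary. The paper argues directly: for an arbitrary separating set $X'$ of $G'$, put $X=X'\setminus I$ (so $w\notin X$) and observe that $c(G'-X')-c(G-X)\le |X'\setminus X|$; hence it suffices to show $c(G-X)<|X|$, which is immediate from the toughness of $G$ when $X$ separates $G$, and otherwise follows from $c(G-X)=1$ together with the fact that a surplus component must lie inside the $T$-region and therefore forces at least two outer vertices into $X$, giving $|X|\ge 2$. Your approach can be salvaged along these lines, but the structural claim about $T$ and the inequalities in the cases $|X\cap O|=2,3$ must be corrected as above before the argument is a proof.
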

\begin{proof}
We let $X'$ be a separating set of vertices in $G'$, and we shall show that $c(G'-X') < |X'|$.
We consider the set $X$ obtained from $X'$ by removing all inner vertices of the new $T$-region.
For the sake of simplicity,
we let $c = c(G' - X') - c(G - X)$ and $x = |X' \sm X|$,
and we note that it suffices to show that $c(G-X)+c < |X|+x$.
Considering the choice of $X$, we observe that $c \leq x$.
We conclude the proof by showing that $c(G-X) < |X|$.
If $X$ is separating in $G$, then the inequality is given by the toughness of $G$.
We can assume that $c(G-X) = 1$. Consequently, $c(G'-X') > c(G-X)$,
so at least two outer vertices of the new $T$-region belong to $X'$.
Thus, they belong to $X$, that is, $|X| \geq 2$ and the inequality follows.
\end{proof}

\section{Note on longest paths}
\label{lp}

We remark that (using similar arguments as in Section~\ref{sc} we obtain that)
a longest path of $F_{i,n}$ has $p_i(n)$ vertices where
\begin{enumerate}
\setlength{\itemsep}{3pt}
\item $p_1(n) = 2 + c_1(n) + 2\sum_{k=0}^{n-1} c_1(k)$,
\item $p_2(n) = 1 + \text{sgn}(n) + c_2(n) + c_2(n-1) + 2\sum_{k=0}^{n-2} c_2(k)$,
\item $p_3(n) = 7 \cdot 2^{n+2} + 2\text{sgn}(n) -15n -19$.
\end{enumerate}

\section*{Acknowledgements}

The author would like to thank Petr Vr\'{a}na for discussing different
strategies for the graph constructions,
and to thank the anonymous referees for their helpful comments.   

The research was supported by the project LO1506 of the Czech Ministry of Education, Youth and Sports
and by the project 17-04611S of the Czech Science Foundation.


\end{document}